\documentclass[final]{siamart0216}
\pdfoutput=1
\usepackage{graphicx}
\usepackage{amssymb}

\usepackage{multirow}
\usepackage[labelformat=simple]{subcaption}
\usepackage{color}

\DeclareGraphicsRule{.tif}{png}{.png}{`convert #1 `dirname #1`/`basename #1 .tif`.png}

\newtheorem{remark}{Remark}[section]

\title{Analysis of rippling in incommensurate one-dimensional coupled chains}

\author{Paul Cazeaux%
\thanks{School of Mathematics, University of Minnesota, Minneapolis, MN 55455 (\email{pcazeaux@umn.edu}, \email{luskin@umn.edu})}
\and
Mitchell Luskin%
\footnotemark[1]
\and 
Ellad B. Tadmor%
\thanks{Department of Aerospace Engineering and Mechanics, University of Minnesota, Minneapolis, MN 55455 (\email{tadmor@umn.edu})}
}
\date{\today}

\numberwithin{equation}{section}
\begin{document}
\maketitle
\begin{abstract}
Graphene and other recently developed 2D materials exhibit exceptionally strong in-plane stiffness. Relaxation of few-layer structures, either free-standing or on slightly mismatched substrates occurs mostly through out-of-plane bending and the creation of large-scale ripples.
In this work, we present a novel double chain model, where we allow relaxation to occur by {\it bending} of the incommensurate coupled system of chains.  As we will see, this model can be seen as a new application of the well-known Frenkel-Kontorova model for a one-dimensional atomic chain lying in a periodic potential. We focus in particular on modeling and analyzing ripples occurring in ground state configurations, as well as their numerical simulation.

\end{abstract}

\begin{keywords}
	2D materials, heterostructures, ripples, incommensurability, atomistic relaxation, Frenkel-Kontorova
\end{keywords}

\begin{AMS}
	65Z05, 70C20, 74E15, 70G75
\end{AMS}

\section*{Introduction}
Two-dimensional layered crystals such as graphene, nature's thin-nest elastic material, exhibit exceptionally strong in-plane stiffness but are also highly flexible with extremely low flexural rigidity compared to conventional membranes.
Mono- and few-layer graphene structures display spontaneous creation of long-range ripples which have been experimentally observed either in suspended conditions~\cite{meyer2007structure} or supported on slightly mismatched substrates~\cite{de2008periodically,singh2010atomic,woods2014commensurate}.
Ripples in graphene is a multiscale phenomenon that is expected to influence its mechanical and electronic properties~\cite{neto2009electronic}, and inducing periodic ripples in a controlled fashion may open new perspectives, e.g., for graphene-based devices~\cite{bao2009controlled,miranda2009graphene}.

Various mechanisms have been described as responsible for the appearance of ripples in two-dimensional materials. Spontaneous ripples in suspended graphene monolayers have been attributed to thermal fluctuations~\cite{fasolino2007intrinsic} or stress~\cite{bao2009controlled}. Graphene monolayers supported by an almost-commensurate substrate relax to form a periodic vertical corrugation pattern~\cite{de2008periodically,ni2012quasi,woods2014commensurate,wang2016entropic}. Sharp out-of-plane folds called ripplocations have been predicted in van der Waals (vdW) homostructures such as MoS$_2$ multilayers, similar to line defects~\cite{kushima2015ripplocations}. In a ripplocation, an additional local line of atoms is inserted in one layer leading to the formation of an out-of-plane wrinkle, while the other layer remains flat.

In this paper, we will study a different mechanism yet, where the spontaneous atomic-scale relaxation of free-standing systems of incommensurate vdW bilayers leads to a \textit{simultaneous} long-range rippling of the bilayer system~\cite{IliaPaper}. Note that vdW multi-layer structures tend to form naturally incommensurate stackings, either due to a relative rotation of the crystalline orientations or to a natural mismatch between the respective lattice constants.
To model mathematically this multiscale phenomenon, we present a new double chain model, where we allow relaxation to occur by {\it bending} of the incommensurate coupled system of chains, as shown in \cref{fig:SimulatedRipples}. We focus in particular on modeling and analyzing rigorously ripples occurring in ground state configurations, as well as their numerical simulation. Our model was motivated in part by the related continuum model of Nikiforov and Tadmor ~\cite{IliaPaper}.

The model~\cref{eq:potentialenergy1,eq:potentialenergy2,eq:siteenergy} developed in this work can be seen as a new application of the well-known Frenkel-Kontorova model for a one-dimensional atomic chain lying in an incommensurate periodic potential~\cite{aubry1983devil,AubryLeDaeron1983,de1984critical,BraunKivshar2004frenkel}. The role of the periodic potential is here assumed by the local interlayer contribution to the interaction energy, which is influenced by the local atom stacking configuration of the bilayer. Indeed, staggered atoms (maximizing the number of neighbors) minimize the energy compared to locally aligned configurations as seen on \cref{fig:LocalConfigurations} (see \Cref{sec:motivating_numerical_example} for details). We will show how, by taking on some bending strain, the bilayer system is able to significantly reduce its energy by maximizing the area of staggered configurations.

\begin{figure}[t]
\centering
\includegraphics[width=1\textwidth]{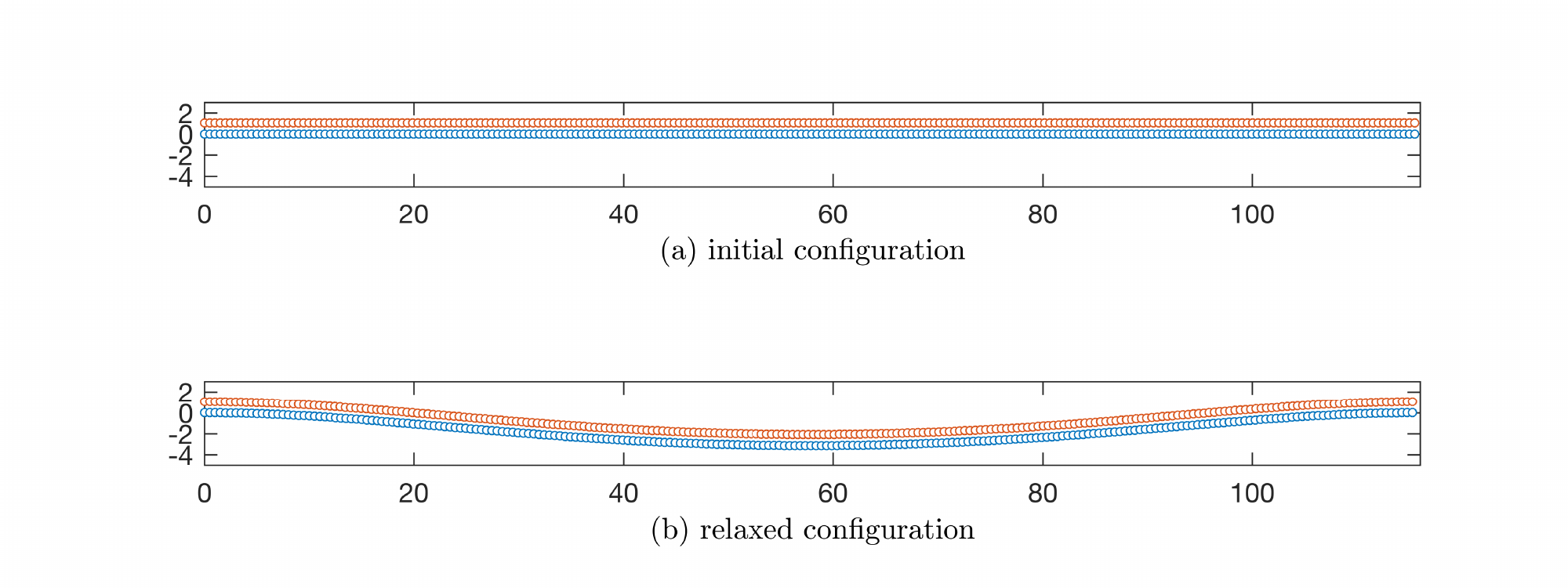}
\caption{Numerical relaxation of coupled chains by the creation of ripples.}
\label{fig:SimulatedRipples}
\end{figure}

Let us note that it is not possible for perfectly inextensible 2D sheets to be curved in more than one direction. As a result, a reasonable 2-dimensional model for the rippling of incommensurate layers should account for the slight extensibility of the individual layers. Thus, many simplifications used in this work, such as Eq.~\cref{eq:curvature}, cannot be used outside of our one-dimensional chain toy model.

This paper is organized as follows. First, we motivate our approach by presenting a simple numerical atomistic simulation to illustrate the phenomenon and motivate our model (see also the related example in~\cite{IliaPaper}). In \Cref{sec:DerivationModel}, we introduce a mathematical model for the rippling mechanism, and we discuss the various assumptions that allow our explicit construction. In \Cref{sec:GroundStateAnalysis}, we recall the classical analytical results on the Frenkel-Kontorova model due to Aubry and Le Daeron~\cite{AubryLeDaeron1983} that allow us to identify the ground state configurations. In \Cref{sec:ApproximationPeriodic}, we show how the popular supercell method of approximating the incommensurate system by periodic approximations can be rigorously justified. Finally, we present in \Cref{sec:NumericalExamples} numerical computations that illustrate and support our analytical results.

\begin{figure}[t]
\centering
\begin{subfigure}{.49\textwidth}\centering
	\includegraphics[width=.8\textwidth]{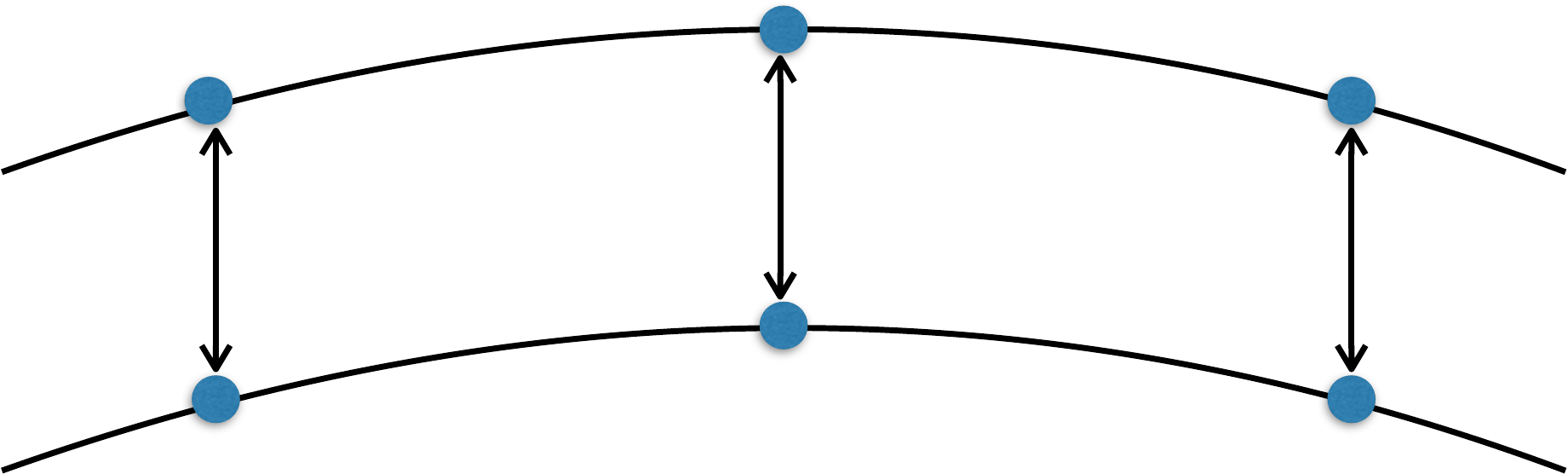}
	\caption{Aligned atomic configuration}\label{fig:LocalConfigurations:a}
\end{subfigure}
\begin{subfigure}{.49\textwidth}\centering
	\includegraphics[width=.8\textwidth]{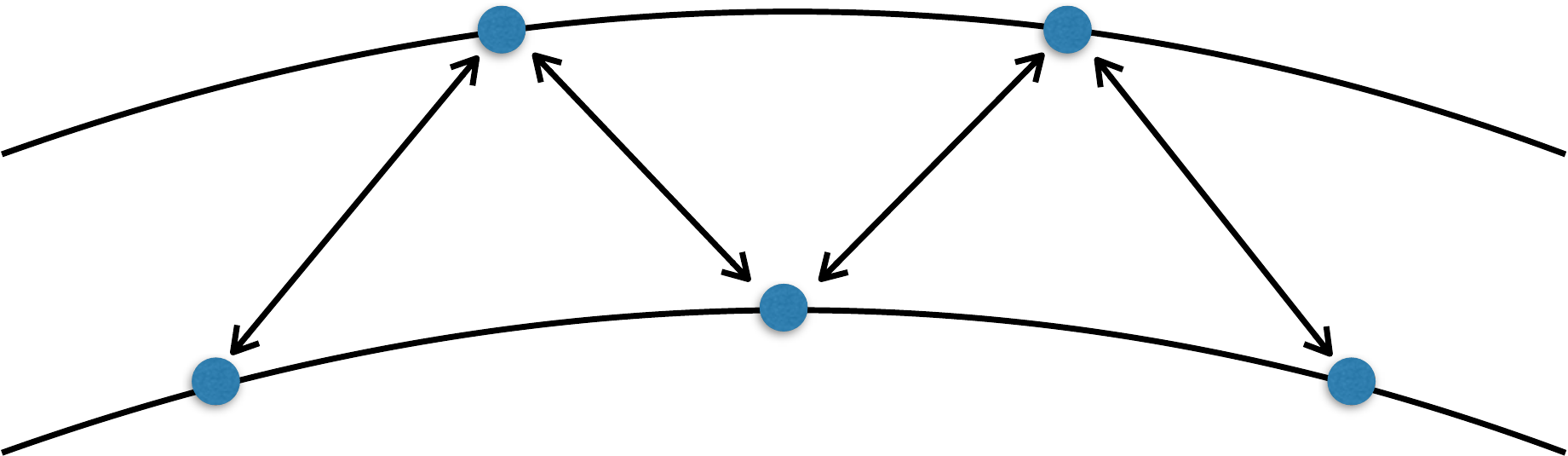}
	\caption{Staggered atomic configuration}\label{fig:LocalConfigurations:b}
\end{subfigure}
\caption{Local atomic configurations: aligned situation~\subref{fig:LocalConfigurations:a} leads to a higher energy contribution than the staggered configuration~\subref{fig:LocalConfigurations:b}.}\label{fig:LocalConfigurations}
\end{figure}

\section{Motivating numerical example} 
\label{sec:motivating_numerical_example}

We present first a simple numerical atomistic simulation to illustrate the phenomenon. Two coupled one-dimensional chains are fitted to represent a single zigzag row of dimers in graphene~\cite{IliaPaper}. Within each chain, harmonic bonds model the interaction between nearest neighbors, and a harmonic angular spring is centered on each atom involving its two nearest neighbors. Between the two chains, atoms interact with a long-range Lennard-Jones potential%
\footnote{
It is common to use the Lennard-Jones potential to model the weak interlayer van der Waals interactions in 2D layered materials. It can describe the overall cohesion of the material quite well, however, it is also too smooth to correctly describe energy variations between different stacking configurations. Truly realistic computations should thus use a more accurate potential, such as the graphitic disregistry-dependent potential developed by Kolmogorov and Crespi~\cite{kolmogorov2000smoothest}. For simplicity, and without loss of generality, we will use in this paper the Lennard-Jones potential, acknowledging that this actually leads to underestimating the formation of ripples.
}.%
Throughout this section, we work in Lennard-Jones units: $\epsilon = 2.39$ meV is the unit of energy and $\sigma = 3.41$ {\AA} is the unit of length for graphene-graphene interaction, see~\cite{girifalco2000carbon}.

To simulate an incommensurate system, we create a periodic supercell by choosing the equilibrium length to be slightly different in each chain: we take $N_1 = 232$ and $N_2 = 233$ atoms respectively in each chain for a total length of $116\ \sigma$. Periodic boundary conditions are applied at the ends. Numerical parameters are fitted to match a zigzag row of dimers in graphene~\cite{kudin2001c,IliaPaper}, leading to the parameterization given in \cref{tab:NumParametersAtomistic}.
\begin{table}[h]
	\caption{Numerical parameters for the atomistic simulations.}
	\label{tab:NumParametersAtomistic}
	\centering

	\begin{tabular}{l|ccc}
	\hline

	\hline
	\multirow{2}{4.9em}{\textbf{Graphene}} & \textbf{Row width} & \textbf{Young modulus} & \textbf{Bending modulus} \\
	& $d = 2.13$ \AA & $Y = 21.5$ eV/\AA\textsuperscript{2} & $D = 1.46$ eV\\
	\hline

	\hline
	& \textbf{Eq. bond length} & \textbf{Bond springs} & \textbf{Angular springs} \\
	\textbf{Chain 1} & $l_1 = \ 0.5\ \sigma$ & $k_1 = 130600\ \epsilon/\sigma^2$ & $k_{\theta, 1} = 764\ \epsilon$\\
	\textbf{Chain 2} & $l_2 \approx 0.498\ \sigma$ & $k_2 \approx 130039\ \epsilon/\sigma^2$ & $k_{\theta, 2} \approx 761\ \epsilon$\\
	\hline

	\hline
	\end{tabular}
\end{table}

Note that the bond and angular spring constants $k$ and $k_\theta$ for each chain are related to the Young modulus $Y$ and bending modulus $D$ of graphene as
\[
 k = \frac{d}{l} Y, \qquad k_\theta = \frac{d}{l} D,
\]
where $d$ is the width of the dimer row and $l$ the equilibrium length of the bonds in the atomistic chain. In the initial configuration, each chain is in its isolated equilibrium state, and their relative distance is chosen as $h = 1.063\ \sigma$. The total energy of the system is a function of all atomic positions $ \mathbf{R}^1_i$, $i = 1, \dots, N_1$ (bottom chain) and $ \mathbf{R}^2_j$, $i = 1, \dots, N_2$ (top chain) and also on the periodic cell length $L$, and can be divided into three components:
\begin{equation}
	U_{\mathrm{tot}} = U_{\mathrm{intra}}^1 + U_{\mathrm{intra}}^2 + U_{\mathrm{inter}}.
\end{equation}
The intralayer energies $U_{\mathrm{intra}}^1$, $U_{\mathrm{intra}}^2$ take the form
\begin{equation}
	\begin{aligned}
		U_{\mathrm{intra}}^\nu
		 = & \ \frac{1}{2}\sum_{i = 1}^{N_l} k_\nu  \left ( \Vert \mathbf{R}^\nu_{i+1} - \mathbf{R}^\nu_i \Vert - l_\nu \right )^2  \\
		& + \frac{1}{2}\sum_{i = 1}^{N_\nu} k_{\theta, \nu}   \left (  \mathrm{sin}^{-1} \mathrm{det}\left [ \frac{\mathbf{R}^\nu_{i+1} - \mathbf{R}^\nu_i }{\Vert \mathbf{R}^\nu_{i+1} - \mathbf{R}^\nu_i \Vert },\ \frac{\mathbf{R}^\nu_{i-1} - \mathbf{R}^\nu_i }{\Vert \mathbf{R}^\nu_{i-1} - \mathbf{R}^\nu_i \Vert }\right ]  \right )^2, \ \nu = 1,\ 2,
	\end{aligned}
\end{equation}
where it is understood that periodic boundary conditions are applied: $\mathbf{R}_0^{1}$, $\mathbf{R}_{N_1+1}^{1}$, etc. refer to the appropriate atomic position in the adjacent periodic images. The interlayer energy $U_{\mathrm{inter}}$ is
\begin{equation}
	U_{\mathrm{inter}} =  \sum_{i = 1}^{N_1} \sum_j 4 \epsilon \left ( \left ( \frac{\sigma}{\Vert \mathbf{R}^2_j - \mathbf{R}^1_i \Vert } \right )^{12} - \left ( \frac{\sigma}{\Vert \mathbf{R}^2_j - \mathbf{R}^1_i \Vert } \right )^6 \right ),
\end{equation}
 where the inner sum runs over all the neighbors of $\mathbf{R}^1_i$ in the top layer that fall initially within the cutoff radius of $29\ \sigma$, whether in the simulation cell or the periodic images.

The numerical relaxation of atomic positions and total length of this coupled system leads to the rippled shape presented in \cref{fig:SimulatedRipples} and the partition of energy presented in \cref{tab:EnergyChangeAtomistic}. The total length of the system is reduced by $0.188\% $.

\begin{table}[ht]
\caption{Overall change in energy components by numerical relaxation.}\label{tab:EnergyChangeAtomistic}
 \centering
\begin{tabular}{c|c|c}
\hline

\hline
\textbf{Bond springs} & \textbf{Angle springs} & \textbf{Lennard-Jones}\\ \hline
$+0.0358 \epsilon $ & $+0.5197 \epsilon$ & $-1.474 \epsilon$\\
\hline

\hline
\end{tabular}
\end{table}

A first observation from this computation is that the energy reduction achieved by bending appears to be about twice the potential energy in the angle springs in the relaxed configuration. Moreover, the two chains relax nearly inextensibly by producing a coherent ripple in the vertical direction by bending together. The in-plane strain energy, modeled here by the potential energy in the bond springs, is an order of magnitude smaller than the other energies.

\begin{figure}[t]
	\centering
	\includegraphics[width=\textwidth]{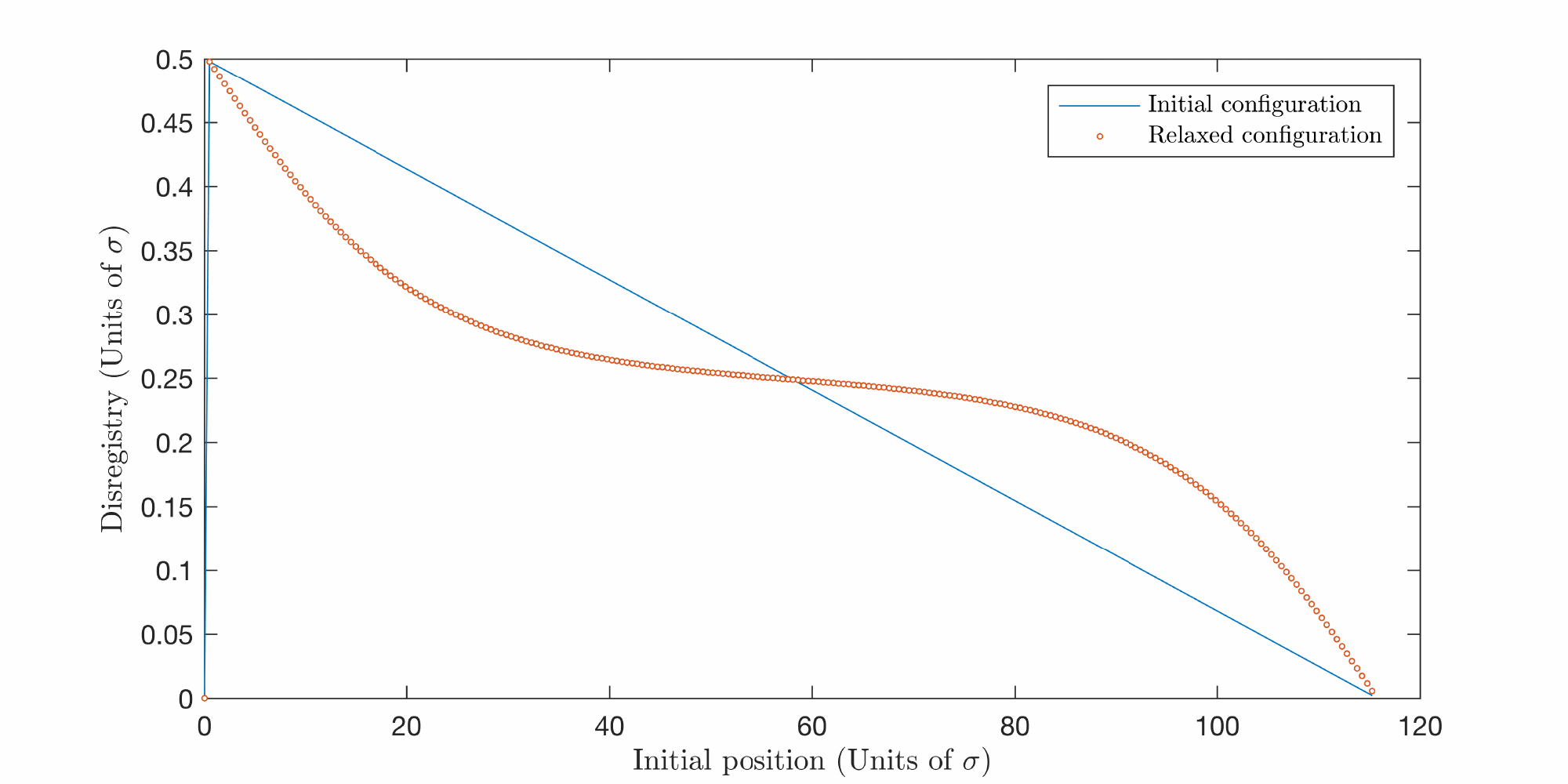}
	\caption{Computed disregistry from simulation of nearly commensurate chains.}
	\label{fig:SimulatedDisregistry}
\end{figure}

 The driving mechanism for this relaxation process can be infered by measuring the \textit{disregistry} of the atoms in layer $2$ compared to the atoms in layer $1$. This is a measure of the local atomic configuration, obtained by projecting the position of each atom in layer $2$ on the curve describing layer $1$. By measuring the distance of the projection to the nearest atom to the left on this curve, we can distinguish between favorable (staggered) configurations, $\Delta = .25\ \sigma$, and unfavorable (aligned) configurations, $\Delta = 0$.

The disregistry obtained in the previous numerical simulation is plotted on \cref{fig:SimulatedDisregistry}. Clearly, the relaxation tends to increase the number of atoms close to the most favorable configuration. On the other hand, the distance between the layers does not vary much.

Our conclusion is that this relaxation process can be correctly modeled by considering only the \textit{bending} degrees of freedom and neglecting any in-plane strain in each layer. The driving mechanism behind the formation of coherent ripples is the competition between two main contributions:
\begin{enumerate}
	\item the bending energy in the angular bonds due to chain curvature;
	\item the interlayer potential energy variations between local configurations arising from the disregistry across the layers.
\end{enumerate}


\section{Derivation of the model}\label{sec:DerivationModel}

We will now develop a mathematical model for the determination of the ground state of a coupled system of parallel periodic atomic chains which are infinite and truly \textit{incommensurate}: unlike the previous numerical example, there exists no periodic supercell for the coupled system. This is achieved by choosing an irrational number to be the ratio $l_2/l_1$ of equilibrium length for the intralayer bonds.

\subsection{Geometry}
First, we describe the model geometry of the coupled system. Our main assumption is that the chains act inextensibly due to the strong in-plane modulus, as observed numerically in \Cref{sec:motivating_numerical_example}. Consecutive atoms in each chain will be separated by a fixed distance, respectively $1$ for the chain $\mathcal{C}^1$ and $\alpha$ for chain $\mathcal{C}^\alpha$ where $\alpha$ is an irrational real number. The chains are allowed to bend in the two-dimensional plane, while they remain separated by a fixed distance $h$. Consequently, $\mathcal{C}^1$ and $\mathcal{C}^\alpha$ form two infinite smooth parametric curves in the plane separated by a fixed distance, as seen on \cref{fig:geomcurved:a}.

\begin{figure}[t]
\centering
\begin{subfigure}[b]{.59\textwidth}\centering
\includegraphics[width=.92\textwidth]{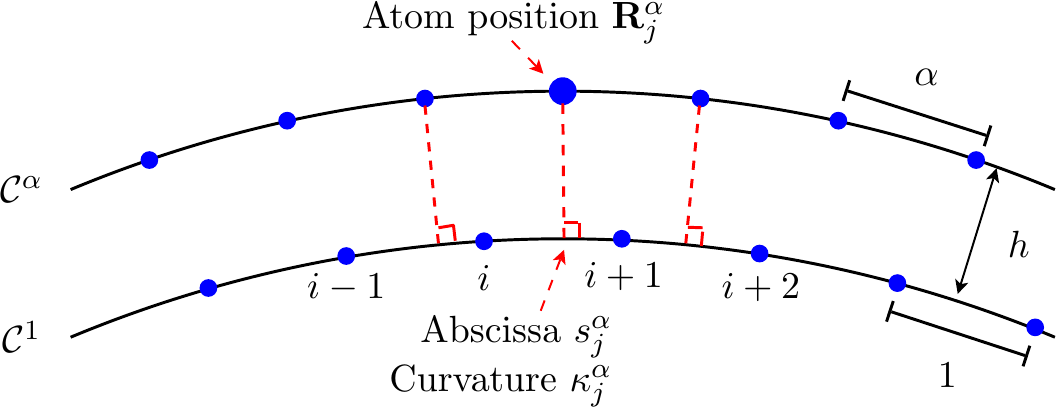}
\vspace{1cm}
\caption{Abscissa and curvature at projected positions.}\label{fig:geomcurved:a}
\end{subfigure}
\begin{subfigure}[b]{.4\textwidth}\centering
\includegraphics[width=.92\textwidth]{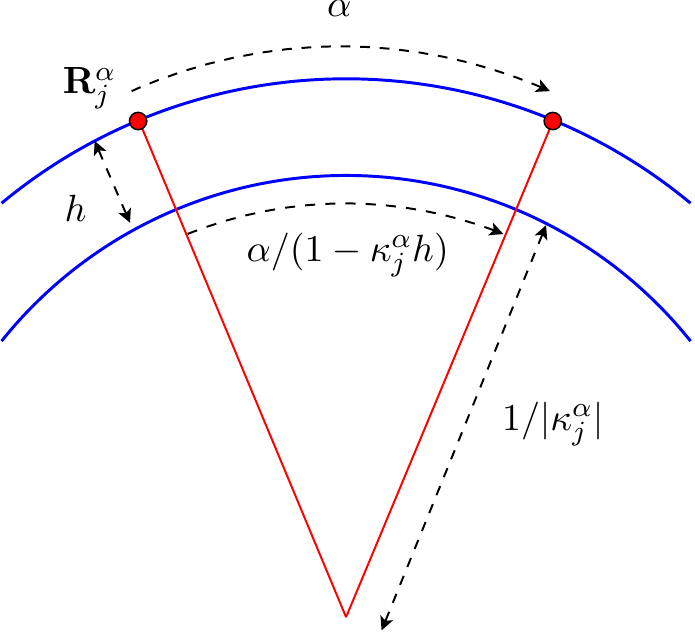}
\caption{Curvature-induced modulation.}\label{fig:geomcurved:b}
\end{subfigure}
\caption{Geometry of a curved system of coupled chains.}\label{fig:geomcurved}
\end{figure}

To derive the equations of the model, we assume that the curvature of the system remains small and varies slowly. Consequently, we use the arc length along either curve to measure the distance between atoms in the respective chain. Furthermore, we choose to describe the whole system by following the curve $\mathcal{C}^1$. This leads us to introduce the following quantities.

\paragraph{Abscissa}
Let $s \in \mathbb{R}$ denote the arc length parameter along the curve $\mathcal{C}^1$, and $\boldsymbol{\gamma}^1(s): \mathbb{R} \to \mathbb{R}^2$ be a natural parametrization of $\mathcal{C}^1$: $\Vert \boldsymbol{\gamma}_1'(s) \Vert = 1$. Atoms on the chain $\mathcal{C}^1$ are situated at $\mathbf{R}^1_i = \boldsymbol{\gamma}^1(i)$ for each $i \in \mathbb{Z}$. Let the atoms of chain $\mathcal{C}^\alpha$ be positioned at points $\{ \mathbf{R}^\alpha_j \}_{j \in \mathbb{Z}}$. Then, their projection on $\mathcal{C}^1$ can be parameterized by a strictly increasing sequence of abscissas as seen on \cref{fig:geomcurved:a}, which we denote
\begin{equation}
	\left \{s^\alpha_j \right \}_{j \in \mathbb{Z}} \qquad \text{s.t.}\quad \forall j \in \mathbb{Z}, \quad \mathbf{R}^\alpha_j = \boldsymbol{\gamma}^1(s^\alpha_j) + h \mathbf{n}^1(s^\alpha_j),
\end{equation}
where $\mathbf{n}^1(s)$ is the normal vector to the curve $\mathcal{C}^1$ at abscissa $s$.

\paragraph{Curvature}
While the arc length between consecutive atoms $\mathbf{R}^\alpha_j$ along $\mathcal{C}^\alpha$ is fixed as $\alpha$, this is not the case for their projections on $\mathcal{C}^1$.
As seen on \cref{fig:geomcurved:b}, the arc length between projections of consecutive atoms, i.e. $s^\alpha_{j+1} - s^\alpha_j$ for $j \in \mathbb{Z}$, depends on the curvature of the curve $\mathcal{C}^1$, which we denote $\kappa$. Note that we choose the convention $\kappa<0$ for the configurations presented in \cref{fig:geomcurved}. We obtain then, approximating locally the curvature by the constant value $\kappa^\alpha_j = \kappa( s^\alpha_j,\,s^\alpha_{j+1})$ :
\begin{equation*}
 s^\alpha_{j+1} =  s^\alpha_j + d^\alpha(s^\alpha_j) \qquad \text{ where } \quad d^\alpha(s^\alpha_j) = \frac{\alpha}{1 - h \kappa^\alpha_j}.
\end{equation*}
Assuming that $\vert h \kappa^\alpha_j \vert \ll 1$, we will make the first-order approximation
\begin{equation}
d^\alpha(s^\alpha_j) =  (1 + h \kappa^\alpha_j)\alpha.
\end{equation}
This leads to the formula for the curvature
\begin{equation}\label{eq:curvature}
h \kappa^\alpha_j =  \frac{s^\alpha_{j+1} - s^\alpha_j}{\alpha} - 1.
\end{equation}

\subsection{Potential energy}
To formulate our model, we consider only two contributions to the potential energy of the system of coupled chains: an interaction energy and a bending energy. For simplicity, the bending energy is modeled as a quadratic term in the curvature variable. It remains to construct a suitable approximation for the inter-chain potential energy as a function of the variables $\{ s_j^\alpha \}_{j \in \mathbb{Z}}$.

Let $V$ be an interatomic potential contributed by two atoms each in a different chain depending only on their relative distance, such as the Lennard-Jones potential. The total interaction energy is formally:
\begin{equation}\label{eq:periodicpotential1}
\mathcal{E}_{int} = \sum_{j} V_{1 \to \alpha}(\mathbf{R}^\alpha_j) \qquad \text{where} \quad V_{1 \to \alpha}(\mathbf{R}) = \sum_{i = -\infty}^\infty V( \Vert \mathbf{R}^1_i - \mathbf{R} \Vert ).
\end{equation}
In this expression, we have isolated the contribution of each atom in chain $\mathcal{C}^\alpha$ as an inter-chain potential energy created collectively by all atoms of chain $\mathcal{C}^1$.
Due to the long-range nature of $V$, this inter-chain potential depends not only on the abscissa $s^\alpha_j$ of the projection of the atom on the curve, but also on the shape of the whole curve as we see on \cref{fig:geomcurved:a}. However, due to the fast decay of the inter-atomic potential, the main contribution to the inter-chain potential is local and can be deduced by approximating positions of the neighboring atoms belonging to the curved chain $\mathcal{C}^1$.

\begin{figure}[t]
\centering
\includegraphics[width=.6\textwidth]{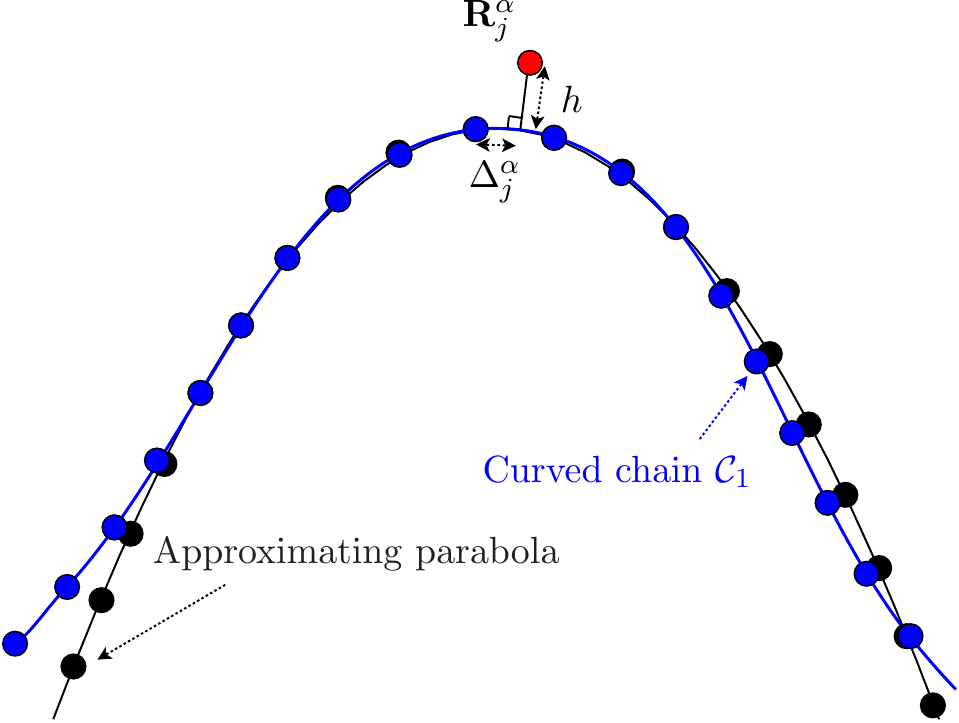}
\caption{Parabolic approximation for the curved chain $\mathcal{C}^1$ near atom $j$ of chain $\mathcal{C}^\alpha$.} \label{fig:ParabolicApproximation}
\end{figure}
Let us construct such an approximation explicitely. For each $j \in \mathbb{Z}$, a parabola is used as a locally second-order accurate approximation $\widehat{\mathcal{C}}^1$ to the exact curve $\mathcal{C}^1$, in the neighborhood of the atom positioned at $\mathbf{R}_j^\alpha$. This parabola, parametrized as $s \mapsto \widehat{\boldsymbol{\gamma}}^1(s)$, is uniquely determined (see \cref{fig:ParabolicApproximation}) by the distance $h$ between the curves and the local curvature $\kappa^\alpha_j$ of curve $\mathcal{C}^1$, choosing $\mathbf{R}_j^\alpha$ as origin of a local system of coordinates. The approximate atomic positions $\widehat{\mathbf{R}}^1_i = \widehat{\boldsymbol{\gamma}}^1(i)$ along this parabola are then deduced from the additional input of the projected abscissa $s_j^\alpha$, using the arc length $\Delta_j =  s_j^\alpha \ (\text{mod }1)$. 

This geometrical construction allows us to introduce now a disregistry-- and curvature--dependent approximate inter-chain potential. For given values of $s, \kappa \in \mathbb{R}$ and $h>0$, we obtain the sequence of positions $\{\widehat{\mathbf{R}}^1_i\}_{i \in \mathbb{Z}}$ as above, relative to the origin $\mathbf{R}$ of the coordinates. Then, we propose as an approximation to $V_{1 \to \alpha}(\mathbf{R})$ introduced in~\cref{eq:periodicpotential1} the quantity:
\begin{equation}\label{eq:periodicpotential2}
\widehat{V}_{\mathrm{per}}(s ; \kappa, h) = \sum_{i = -\infty}^\infty V(\Vert  \widehat{\mathbf{R}}^1_i - \mathbf{R} \Vert).
\end{equation}
It is clear that the potential $\widehat{V}_{\mathrm{per}}(s ; \kappa, h)$ is a periodic function of the variable $s$ with period $1$. Note that in the following, we will drop the dependency of $\widehat{V}_{\mathrm{per}}$ on the fixed variable $h$.
\begin{remark}
This construction of $\widehat{V}_{\mathrm{per}}$ gives an explicit formula for the approximate potential. Hence, it can be used to precompute numerically and accurately an approximate inter-chain potential, for calculations aimed at finding ground state configurations.
\end{remark}
\noindent The total potential energy of the full system is then modeled as the formal sum:
\begin{equation}\label{eq:potentialenergy1}
U \left ( \{ s^\alpha_j \} \right ) = \sum_j \left ( \widehat{V}_{\mathrm{per}}\left (s^\alpha_j  ; \kappa^\alpha_j \right ) + \frac{\beta}{2} \left ( \kappa^\alpha_j\right )^2 \right ),
\end{equation}
where $\beta$ is the angle spring constant modeling the resistance to bending of the coupled system of chains. Thanks to Eq.~\cref{eq:curvature}, this energy is formally
\begin{equation}\label{eq:potentialenergy2}
U \left ( \{ s^\alpha_j \} \right ) = \sum_j  v(s^\alpha_j, s^\alpha_{j+1}),
\end{equation}
where $v(s^\alpha_j, s^\alpha_{j+1})$, the local potential energy at site $j$, is given by
\begin{equation}\label{eq:siteenergy}
v(s, t) = \widehat{V}_{\mathrm{per}}\left (s\  ; \ \left ( \frac{t - s}{\alpha } - 1 \right ) h^{-1}\right ) + \frac{\beta}{2h^2} \left (  \frac{t- s}{\alpha} -1 \right )^2.
\end{equation}

\section{Ground state analysis}\label{sec:GroundStateAnalysis}
The model described by Eq.~\cref{eq:potentialenergy1} or~\cref{eq:potentialenergy2} is a generalized Frenkel-Kontorova model. There exists a vast literature on the Frenkel-Kontorova model, which we will not attempt to summarize here: see e.g.~\cite{BraunKivshar2004frenkel}. We only present the basic theory necessary for our analysis.

 Following~\cite{AubryLeDaeron1983}, we will make the following assumptions on the function $v$:
\begin{enumerate}
\item $v(s,t)$ has a lower bound: there exists $B$ such that
\begin{equation}\label{eq:LowerBoundCondition}
	B \leq v(s, t), \qquad \forall s, t \in \mathbb{R}.
\end{equation}
\item $v(s, t)$ is invariant by translation by $(1,1)$ and satisfies a symmetry condition:
\begin{equation}\label{eq:PeriodicityCondition}
	v(s+1, t+1) = v(s, t), \quad  v(s, t) =  v(-t, -s),  \qquad \forall s, t \in \mathbb{R}.
\end{equation}
\item $v(s, t)$ is a twice continuously differentiable function, and satisfies for all $s,t \in \mathbb{R}$,
\begin{equation}\label{eq:TwistCondition}
	-\frac{\partial^2 v}{\partial s \partial t}(s, t) > C > 0,
\end{equation}
where $C$ is a finite positive constant.
\end{enumerate}
Note that for the function $v$ defined by~\cref{eq:siteenergy}, conditions~\cref{eq:LowerBoundCondition} and~\cref{eq:PeriodicityCondition} are satisfied whenever $\widehat{V}_{\mathrm{per}}$ is bounded from below. For the twist condition~\cref{eq:TwistCondition} to hold, stronger conditions are necessary, namely $\widehat{V}_{\mathrm{per}}$ must be twice continuously differentiable with $\frac{\partial^2 v}{\partial s \partial t}$ bounded from below and $\beta$ large enough.

Under these conditions, Aubry-Mather theory leads to the following description and results, for which details are to be found in the seminal paper~\cite{AubryLeDaeron1983}:
\begin{definition}
A minimum energy configuration is a sequence $\{ s_j \}$ such that any finite change $\delta_j$ of a finite set of atoms necessarily increases the energy, that is:
\begin{equation}
\sum_{j=J'}^{J} \left ( v(s_j + \delta_j, s_{j+1} + \delta_{j+1}) -  v(s_j , s_{j+1})\right ) \geq 0
\end{equation}
for any $J' < J$, and any choice of $\delta_j$ with $\delta_j = 0$ for $j < J'$ and $j \geq J$.
\end{definition}
The set of minimum energy configurations, which includes all possible boundary conditions, can be further divided between {\it defect configurations} and {\it ground state configurations}. To make this distinction, one derives the equilibrium equations
\begin{equation}\label{eq:equilibrium}
\frac{\partial v}{\partial s_j} (s_j, s_{j+1}) + \frac{\partial v}{\partial s_j} (s_{j-1}, s_j) = 0.
\end{equation}
These equations can be interpreted as the motion equation of a dynamical system with discrete time $j$ by introducing the conjugate variable for $s_j$,
\begin{equation}
p_j = \frac{\partial v}{\partial s_j} (s_{j-1}, s_{j}).
\end{equation}
Then $( p_{j+1},s_{j+1})$ are recursively defined from $(p_j, s_j)$ by the implicit system derived from~\cref{eq:equilibrium}:
\begin{equation}\label{eq:dynamicalsystem}
\left \{ \begin{aligned}
\frac{\partial v}{\partial s_j}(s_j, s_{j+1}) &= - p_j, \\
p_{j+1} &= \frac{\partial v}{\partial s_{j+1}}(s_j, s_{j+1}).
\end{aligned} \right.
\end{equation}
Note that under the assumptions above, $\frac{\partial v}{\partial s}(s,t)$ is a monotonic function of $t$ for fixed $s$, and hence system~\cref{eq:dynamicalsystem} has a unique solution for a given $(s_j, p_j)$. One then introduces the bounded variable in the unit torus $\mathbb{T}$,
\begin{equation}
	\theta_j \equiv s_j \ (\text{mod }1),
\end{equation}

and then defines a non-linear operator $\widetilde{T}$ on the cylinder $ \mathbb{R} \times \mathbb{T}$ onto the cylinder,
\begin{equation}
\begin{pmatrix}  p_{j+1}  \\ \theta_{j+1} \end{pmatrix} = \widetilde{T} \begin{pmatrix}  p_{j} \\ \theta_j \end{pmatrix}.
\end{equation}
By construction, $\widetilde{T}$ is an area-preserving one-to-one map. We then define the orbit of a point $ (p_0, \theta_0)$ as an infinite sequence $(p_j, \theta_j) = \widetilde{T}^j (p_0, \theta_0)$ for $-\infty < j < \infty$.
\begin{definition}
	An orbit $(p_j, \theta_j)$ is recurrent if there exist an integer sequence $j_k \to \infty$ such that
	\[
		\lim_{k \to \infty }\left ( p_{j_k}, \theta_{j_k} \right ) = \left (p_0, \theta_0\right ).
	\]
\end{definition}
\begin{definition}\noindent
\begin{enumerate}
\item A defect configuration $\{ s_j \}$ is a stationary configuration (satisfying~\cref{eq:equilibrium}) such that the associated orbit $\{ \widetilde{T}^j (p_0, \theta_0)\}$ is non-recurrent.
\item An elementary defect is a defect configuration which is also a minimum energy configuration.
\item A ground-state configuration is a minimum energy configuration which is not a defect configuration (the associated orbit is recurrent).
\end{enumerate}
\end{definition}
Using the previous definitions, the following two theorems, which were first stated and proved by Aubry and Le Daeron in~\cite{AubryLeDaeron1983}, largely identify the minimum energy configurations and ground states of the Frenkel-Kontorova model:
\begin{theorem}\noindent
\begin{enumerate}
\item
For any minimum energy configuration, there exists two real numbers $l$ (the atomic mean distance) and $\omega$ (the phase), such that for any $j$, $s_j$ and $j l + \omega$ belong to the same interval $[k_j/2 , (k_j+1)/2]$ where $k_j = \mathrm{Int} (2 s_j)$.
\item For any value of $l$, there exists minimum energy configurations $\{ s_j \}$ such that
\[
\lim_{\vert j - k \vert \to \infty} \frac{s_j - s_k}{j - k} = l.
\]
\end{enumerate}
\end{theorem}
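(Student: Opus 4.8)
The plan is to deduce everything from the twist condition~\cref{eq:TwistCondition}, whose integrated form is the \emph{corner inequality}: for any reals $a<b$ and $c<d$,
\[
v(a,c)+v(b,d)-v(a,d)-v(b,c)=\int_a^b\!\!\int_c^d \frac{\partial^2 v}{\partial s\,\partial t}(s,t)\,dt\,ds<0,
\]
so that the ``sorted'' pairing is strictly cheaper: $v(a,c)+v(b,d)<v(a,d)+v(b,c)$. First I would use this to prove \emph{Aubry's non-crossing lemma}: any two minimum-energy configurations $\{s_j\}$ and $\{s'_j\}$ cross at most once. Indeed, if they crossed twice, replacing them on the (finite) crossing window by their pointwise minimum $u_j=\min(s_j,s'_j)$ and maximum $w_j=\max(s_j,s'_j)$ leaves the energy unchanged away from crossings but, by the corner inequality applied at each transversal crossing, strictly lowers the total bond energy; since $\{u_j\}$ and $\{w_j\}$ coincide with $\{s_j\}$ and $\{s'_j\}$ outside a compact set, this contradicts the minimality of at least one of them. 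Contacts at sites ($s_{j_0}=s'_{j_0}$) are handled by the same exchange together with the monotonicity of $\partial v/\partial s$ in its second argument noted after~\cref{eq:dynamicalsystem}. This lemma is the technical heart of the whole argument.

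For Part 1, I would apply the non-crossing lemma to the family of translates $T_{pq}\{s\}_j:=s_{j+q}-p$, each of which is again a minimum-energy configuration by the invariance~\cref{eq:PeriodicityCondition}. Non-crossing forces each $T_{pq}\{s\}$ to be comparable to $\{s_j\}$ (weakly above, weakly below, or crossing exactly once), and the set $\{(p,q):s_{j+q}-p\ge s_j\ \forall j\}$ is a half-plane of the $(p,q)$-lattice whose boundary slope defines the mean distance $l=\lim_{j}s_j/j$. Comparability of $\{s_j\}$ with the two translates that bracket the line $p=ql$ yields the bounded-deviation estimate $|s_j-(jl+\omega)|<\tfrac12$; the sharp half-integer cell statement, that $s_j$ and $jl+\omega$ both lie in $[k_j/2,(k_j+1)/2]$ with $k_j=\mathrm{Int}(2s_j)$, follows from the detailed comparison with the nearest translates together with the reflection symmetry in~\cref{eq:PeriodicityCondition}, as carried out in~\cite{AubryLeDaeron1983}.

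For Part 2, I would treat the rational and irrational cases separately. For $l=p/q$ in lowest terms, I would minimize $U$ over the class of $(p,q)$-periodic configurations $s_{j+q}=s_j+p$; modulo the overall $\mathbb{Z}$-translation this is the minimization of a continuous function, bounded below by~\cref{eq:LowerBoundCondition}, over a compact quotient, with coercivity supplied by the quadratic bending term in~\cref{eq:siteenergy}. A minimizer exists, and non-crossing with its own translates shows it is a genuine minimum-energy configuration of mean distance $p/q$. For irrational $l$, I would pick rationals $p_n/q_n\to l$, take the corresponding periodic minimizers normalized by $s^{(n)}_0\in[0,1)$, and extract a pointwise-convergent subsequence by a diagonal argument; this is possible precisely because the bounded-deviation constant $\tfrac12$ from Part 1 is universal, so the configurations stay uniformly bounded on every finite window. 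The limit $\{s^\infty_j\}$ has mean distance $l$, and it is a minimum-energy configuration because, on any finite window, the energy and the competitor inequality pass to the pointwise limit using the $C^2$ regularity of $v$.

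The main obstacle I expect is twofold. The non-crossing lemma requires a careful case analysis — transversal crossings between sites versus contacts at sites, and verifying that the exchange yields a \emph{strict} energy decrease — and every later step rests on it. The second difficulty is the limiting argument for irrational $l$: one must guarantee that the pointwise limit remains a \emph{global} minimum-energy configuration and not merely a solution of the equilibrium equations~\cref{eq:equilibrium}, and it is exactly the uniform bounded-deviation estimate from Part 1 that secures the required compactness and passage to the limit.
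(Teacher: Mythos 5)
The paper does not actually prove this theorem --- it is quoted verbatim from Aubry and Le Daeron~\cite{AubryLeDaeron1983}, to which the reader is referred. Your sketch (corner inequality from the twist condition~\cref{eq:TwistCondition}, Aubry's non-crossing/exchange lemma, comparison with the lattice of translates to extract the rotation number and the bounded-deviation estimate, periodic minimizers for rational $l$ and a compactness-plus-limit argument for irrational $l$) is a faithful outline of exactly that reference's argument, so it is correct and takes the same route as the paper's cited source.
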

Note that the number $l$ is also called the rotation number of the orbit corresponding to a minimum energy configuration $\{ s_j \}$. While it can in principle be arbitrarily chosen, in the case of the present model we are interested mainly in the choice $l=\alpha$, which models the case of a bilayer system which is flat on average. Other choices could be interpreted as systems that have a global curvature.

\begin{theorem} \label{th:AubryLeDaeron}\noindent
\begin{enumerate}
\item The set of minimum energy configurations with atomic mean distance $l$ is totally ordered, i.e., for two such configurations $\{ s_j \}$ and $\{ t_j \}$, one of the assertions holds true for all $j$: $s_j < t_j$, $s_j = t_j$, or $s_j > t_j$.
\item The whole set of ground state configurations with irrational atomic mean distance $l$ can be parameterized with one or two {\it hull functions} $f$ which are strictly increasing:
\begin{enumerate}
\item When $f$ is continuous, a unique function $f$ parameterizes all ground states;
\item When $f$ is discontinuous, two determinations $f^+$ and $f^-$ are necessary to parameterize all ground states. They are both strictly increasing and correspond to the right continuous or left continuous determinations of the same discontinuous function. (In this case, the set of discontinuity points of $f^\pm$ is dense on $\mathbb{R}$.)
\item Functions $x \mapsto g^\pm(s) = f^\pm(s) - s$ are $1$-periodic in $\mathbb{R}$.
\item Any ground-state configuration is determined by a phase $\omega$ and a determination of $f$, either $f^+$ or $f^-$ such that
\begin{equation}\label{eq:hullfunction}
s_j = f(j l + \omega).
\end{equation}
\end{enumerate}
\end{enumerate}
\end{theorem}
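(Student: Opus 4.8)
The plan is to reduce both assertions to a single geometric property of minimizers, \emph{Aubry's crossing lemma}, which is the only place the twist condition \cref{eq:TwistCondition} enters essentially. The lemma states that two distinct minimum energy configurations cross at most once (i.e.\ $s_j - t_j$ changes sign at most once as $j$ ranges over $\mathbb{Z}$). Its engine is the submodularity of $v$ implied by \cref{eq:TwistCondition}: integrating $-\partial^2 v/\partial s\,\partial t > C > 0$ over a rectangle gives, for all $s \le s'$ and $t \le t'$,
\[
v(s,t) + v(s',t') \;\le\; v(s,t') + v(s',t),
\]
strictly when $s<s'$ and $t<t'$, so that the order-preserving pairing of coordinates is always the cheaper one.

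To prove the crossing lemma I would argue by exchange. Given minimizers $\{s_j\}$ and $\{t_j\}$, form $z_j = \min(s_j,t_j)$ and $w_j = \max(s_j,t_j)$. Bond by bond, if the two configurations keep the same order at $j$ and $j+1$ then $\{(z_j,z_{j+1}),(w_j,w_{j+1})\}$ coincides with $\{(s_j,s_{j+1}),(t_j,t_{j+1})\}$; if they cross over that bond, the submodular inequality above shows the min/max pairing has \emph{strictly} smaller energy. Suppose now the two minimizers crossed twice. Choosing a window $[J',J]$ containing both crossings, the order of $\{s_j\}$ and $\{t_j\}$ agrees at $J'$ and $J$ (an even number of sign changes), so outside the window $z$ and $w$ each coincide with a single one of the original configurations; the exchange therefore produces an admissible finite variation with strictly lower energy, contradicting minimality of both. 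Hence at most one crossing occurs.

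For total ordering (part 1) I would combine the crossing lemma with the invariances \cref{eq:PeriodicityCondition}: if $\{s_j\}$ is a minimizer with mean distance $l$, so are its value-translates $\{s_j+m\}$ and index-translates $\{s_{j+n}\}$, all with mean distance $l$, and the preceding theorem bounds $|s_j - (jl+\omega)|\le 1/2$, so any two minimizers with the same $l$ have uniformly bounded difference. Given two such minimizers that are not ordered, the crossing lemma forces \emph{exactly} one crossing; I would rule this out by squeezing the configuration between integer value-translates of the other and exploiting that the phases $\{nl \bmod 1\}$ are dense (irrationality of $l$) to manufacture a pair of minimizers crossing twice, contradicting the lemma. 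This upgrade from ``at most one crossing'' to genuine ordering is the delicate point and is where the rotation-number bound and irrationality are indispensable; I would follow the bookkeeping of~\cite{AubryLeDaeron1983} here.

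For the hull function (part 2) the work is then essentially soft. Fix a ground state $\{s_j\}$ of irrational mean distance $l$ and phase $\omega$; since $l$ is irrational the arguments $\{jl+\omega : j\in\mathbb{Z}\}$ are distinct and dense in $\mathbb{R}$, so setting $F(jl+\omega):=s_j$ defines a function on a dense set. Total ordering applied to the index-translates, which are ground states of phase $\omega+nl$, shows $F$ is strictly increasing; a monotone function on a dense set extends to a monotone function on $\mathbb{R}$ possessing one-sided limits everywhere, whose left- and right-continuous determinations give $f^-$ and $f^+$ (equal, and denoted $f$, where continuous), yielding $s_j=f(jl+\omega)$ as in \cref{eq:hullfunction}. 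The identity $f(x+1)=f(x)+1$, forced by value-translation invariance, gives the $1$-periodicity of $g^\pm=f^\pm-s$. Finally, index translation acts on the phase by the irrational rotation $R_l:\theta\mapsto\theta+l$ on $\mathbb{T}$, so the discontinuity set of $g$ is $R_l$-invariant; by minimality of an irrational rotation it is either empty (the continuous case, one hull function) or dense (the discontinuous case, requiring both $f^+$ and $f^-$), which is exactly the dichotomy claimed. The recurrence condition in the definition of ground state is what selects the orbits landing on the graph of $f^\pm$ rather than the non-recurrent configurations bridging its gaps. I expect Step~3 to be routine; the true obstacle remains the exchange argument and, above all, the passage to full total ordering at fixed irrational rotation number.
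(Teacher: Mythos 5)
The paper does not prove \cref{th:AubryLeDaeron}; it states it as a known result and defers entirely to Aubry and Le Daeron~\cite{AubryLeDaeron1983}. Your sketch is a faithful outline of exactly that standard argument --- the submodularity/exchange proof of the crossing lemma from the twist condition~\cref{eq:TwistCondition}, the upgrade to total ordering via translates, and the construction of the hull function as the monotone extension of $jl+\omega\mapsto s_j$ --- so it takes the same route as the paper's cited source, and I see no gap beyond the bookkeeping you explicitly defer (in particular, part 1 as stated also covers rational $l$, where the density-of-phases step must be replaced by the periodic-translate argument of~\cite{AubryLeDaeron1983}).
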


In particular, incommensurate ground-states are represented by trajectories which are rotating either on a smooth Kolmogorov-Arnold-Moser torus (when $f$ is continuous) or on a discontinuous Cantor set or Cantorus (when $f$ is discontinuous). A striking consequence is that ground-state trajectories always belong to the possibly zero-measure set of non-chaotic trajectories, in particular the ground-state ensemble is described by a Cantorus typically embedded in a chaotic region of the map. The transition between these two regimes has been called breaking of analyticity~\cite{AubryLeDaeron1983,aubry1983devil,de1984critical}.

\section{Approximation by periodic configurations: a partial justification}\label{sec:ApproximationPeriodic}

Incommensurate ground-states are typically constructed as the limit of commensurate ground-states~\cite{aubry1983devil,AubryLeDaeron1983,de1984critical} with rotation number $\alpha_n = p_n/q_n$ converging to the irrational $\alpha$. However, in general only a subsequence of such configuration sequences converge.

In fact, we will see that when there exists a smooth invariant torus for $\widetilde{T}$ with rotation number $\alpha$, then it is the limit (e.g. in the Hausdorff metric) of such periodic orbits; however, when the set of ground states with rotation number $\alpha$ forms a Cantor set, one may only prove that it is the limit of a particular sequence of commensurate ground states. For example, a given sequence may in principle converge to an incommensurate defect configuration, i.e., an orbit which is asymptotic to the Cantorus in both directions.

A validation of this approach, at least in the case of a smooth hull function, can be obtained by the use of perturbation theory. The starting point is given by the following results, see e.g.~\cite{Falcolini1992, Tompaidis1996} for the proof:
\begin{proposition}\label{prop:normal_form}
Assume that $\widetilde{T}: \mathbb{T}\times \mathbb{R}$ is a 2-dimensional symplectic map of class $C^r$ with $r > 1$. Suppose that $\widetilde{T}$ admits a $C^r$ (resp. analytical) invariant circle $\Gamma$, homotopic to $\mathbb{T}\times \{0\}$, on which the motion is $C^r$-conjugate  (resp. analytical) to a rigid rotation with rotation number $\alpha$ Diophantine of type $(K,\tau)$, i.e. such that
\[
\vert q \cdot \alpha \vert > \frac{K}{q^\tau},\qquad \forall q \in \mathbb{N}^*.
\]
Then, given any nonnegative integer $k < (r-1)/\tau$, there exists a symplectic $C^{r-1-k \tau}$  (resp. analytical) mapping $\Psi$ defined in a neighborhood of $\Gamma$, mapping $\Gamma$ to $\mathbb{T}\times \{0\}$, and having a $C^{r-1-k \tau}$  (resp. analytical) inverse in a neighborhood of $\Gamma$, such that
\begin{equation}\label{eq:normal_form}
\psi \circ \widetilde{T} \circ \psi^{-1} (\Phi, \bar{H}) = (\Phi + \alpha +  \bar{H}\Delta(\bar{H}), \ \bar{H}) + R(\Phi, \bar{H}),
\end{equation}
where $\Delta: \mathbb{R} \to \mathbb{R}$ and $R: \mathbb{T} \times \mathbb{R} \to \mathbb{T} \times \mathbb{R}$ are $C^{r-1-k \tau}$  (resp. analytical) functions, $\Delta(0) \neq 0$, and the remainder $R$ satisfies in a neighborhood of $\Gamma$,
\[
\Vert R \Vert \leq C_k \vert \bar{H} \vert^{k+1}.
\]
\end{proposition}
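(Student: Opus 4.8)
The plan is to reduce the statement to a Birkhoff normal form computation in a neighborhood of the invariant circle $\Gamma$, the only genuine difficulty being the small-divisor estimates forced by the Diophantine condition. First I would exploit the hypothesis that the dynamics on $\Gamma$ is $C^r$ (resp.\ analytically) conjugate to the rigid rotation $R_\alpha : \Phi \mapsto \Phi + \alpha$. Composing with this conjugacy, extended symplectically to a tubular neighborhood of $\Gamma$ (for instance by taking the conjugacy together with the identity in the transverse action variable and correcting it to preserve the symplectic form), I obtain coordinates $(\Phi, H)$ in which $\Gamma = \mathbb{T} \times \{0\}$ is invariant and $\widetilde{T}|_{\Gamma}$ is exactly $R_\alpha$. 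In these coordinates the map takes the form $\widetilde{T}(\Phi, H) = (\Phi + \alpha + a_1(\Phi) H + O(H^2),\ H + O(H^2))$, where the absence of an $H$-independent term in the second component encodes the invariance of $\{H = 0\}$.

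The core of the argument is an iterative elimination of the $\Phi$-dependence, order by order in $H$. At the $m$-th step (for $m = 1, \dots, k$) I would seek a symplectic transformation $\psi_m$, realized as the time-one flow of a Hamiltonian $S_m(\Phi, H) = H^{m+1} u_m(\Phi)$ (equivalently, generated by a mixed generating function), chosen so that conjugation by $\psi_m$ removes the angle-dependent part of the degree-$m$ terms while leaving the lower-order normal form intact. Carrying out the conjugation and collecting the terms of order $H^m$ reduces the requirement on $u_m$ to the cohomological equation
\[
u_m(\Phi + \alpha) - u_m(\Phi) = g_m(\Phi) - \langle g_m \rangle,
\]
where $g_m$ is the known angle-dependent part produced at this order and $\langle g_m \rangle$ its average over $\mathbb{T}$. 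Passing to Fourier coefficients yields $\widehat{(u_m)}_q = \widehat{(g_m)}_q / (e^{2\pi i q \alpha} - 1)$ for $q \neq 0$, and the Diophantine hypothesis $|q\alpha| > K |q|^{-\tau} \pmod 1$ gives $|e^{2\pi i q\alpha} - 1| \gtrsim K |q|^{-\tau}$, so the solution exists with a loss of $\tau$ derivatives in the Hölder scale (resp.\ on a slightly thinner complex strip in the analytic case). Since $\langle g_m \rangle$ cannot be removed, it contributes the next coefficient of the twist series $\bar{H}\Delta(\bar{H})$.

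Bookkeeping of regularity then yields the stated exponents: the initial reduction to the cohomological framework costs one derivative, and each of the $k$ solves costs $\tau$ derivatives by the small-divisor estimate, so the composed transformation $\Psi = \psi_k \circ \cdots \circ \psi_1 \circ (\text{straightening})$ and the resulting normal form are of class $C^{r - 1 - k\tau}$; the condition $k < (r-1)/\tau$ is precisely what keeps this exponent positive, so that all objects remain at least continuously differentiable. After the $k$ steps every angle-dependent term of degree $\leq k$ in $H$ has been eliminated, so Taylor's theorem gives the remainder bound $\Vert R \Vert \leq C_k |\bar{H}|^{k+1}$ of \cref{eq:normal_form}. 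Finally, $\Delta(0)$ is the first Birkhoff invariant, equal to $\langle a_1 \rangle$, and its non-vanishing (indeed positivity) follows from the monotone twist property of $\widetilde{T}$ inherited from \cref{eq:TwistCondition}.

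The main obstacle is the interplay between the small divisors and the finite differentiability: each cohomological equation is solvable only at the price of $\tau$ lost derivatives, and one must verify tame, Rüssmann-type estimates guaranteeing that $u_m$ lies in the appropriate Hölder space with controlled norm, so that the iteration closes without exhausting the available regularity before step $k$. In the analytic category this is handled instead by Cauchy estimates on a decreasing sequence of complex strips. Throughout, the construction must be carried out while preserving the symplectic structure at every stage, which is exactly why the $\psi_m$ are taken as Hamiltonian flows rather than arbitrary diffeomorphisms. These are precisely the estimates established in \cite{Falcolini1992, Tompaidis1996}.
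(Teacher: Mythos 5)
The paper does not actually prove \cref{prop:normal_form}: it is quoted from the literature, with the proof deferred to \cite{Falcolini1992, Tompaidis1996}. Your sketch is a correct outline of the standard argument behind that result, and the key ingredients are all in the right place: symplectic straightening of $\Gamma$ using the conjugacy to the rigid rotation, order-by-order elimination of the angle dependence via cohomological equations $u_m(\Phi+\alpha)-u_m(\Phi)=g_m-\langle g_m\rangle$, the small-divisor bound $\vert e^{2\pi i q\alpha}-1\vert \gtrsim K\vert q\vert^{-\tau}$ costing $\tau$ derivatives per step (hence the exponent $r-1-k\tau$ and the constraint $k<(r-1)/\tau$), the averages $\langle g_m\rangle$ feeding the twist series $\bar H\Delta(\bar H)$, and $\Delta(0)\neq 0$ coming from the monotone twist property inherited from \cref{eq:TwistCondition} (which, strictly speaking, is an extra hypothesis not visible in the bare statement of the proposition but satisfied by the map at hand).

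One organizational difference from the cited sources is worth noting. You build the normal form as a composition of time-one Hamiltonian flows $\psi_k\circ\cdots\circ\psi_1$, i.e., a classical Birkhoff normal form iteration, whereas Falcolini--de la Llave and Tompaidis construct an \emph{approximately invariant integral} $H$ (a single function whose Taylor coefficients in the transverse variable solve the same chain of cohomological equations) and then take $(\Phi,\bar H)$ with $\bar H=H$ as the new coordinates; this is exactly what the Remark following the proposition alludes to. The two routes are equivalent in substance, but the approximate-integral formulation makes the remainder bound $\Vert R\Vert\le C_k\vert\bar H\vert^{k+1}$ cleaner in the finitely differentiable category: your final appeal to ``Taylor's theorem'' on the transformed map requires the surviving regularity $r-1-k\tau$ to be at least $k+1$ in the transverse variable, which the condition $k<(r-1)/\tau$ alone does not guarantee. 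That gap is precisely where the tame, R\"ussmann-type estimates of the cited references do the real work, so your closing caveat is apt, but the remainder estimate should be extracted from the construction of the approximate integral rather than from a naive Taylor expansion in the reduced regularity class.
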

\begin{remark}
The normal form~\cref{eq:normal_form} is obtained in practice by constructing an integral $H$ which is approximately invariant by $\widetilde{T}$, up to $\mathcal{O}(\vert \bar{H} \vert^{k+1})$.
\end{remark}
Using the mapping $\psi$, we have thus a new set of coordinates $(\Phi,\bar{H})$ defined in a neighborhood of $\Gamma$, and such that $\Gamma$ is the curve $\bar{H} = 0$. In particular, the fact that $\mathcal{T}$ is approximately integrable in the vicinity of $\Gamma$ leads to the following result:
\begin{lemma}\label{lemma:strip}
If $\vert \alpha - p/q \vert $ is small enough, all periodic orbits of type $p/q$ are contained in the strip
\[
\vert \bar{H} \vert \leq K \left \vert \alpha - \frac{p}{q} \right \vert,
\]
where $K$ depends only on the system and on the circle.
\end{lemma}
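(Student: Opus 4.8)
The plan is to work entirely in the normal-form coordinates $(\Phi,\bar H)$ furnished by \cref{prop:normal_form}, writing $F=\psi\circ\widetilde T\circ\psi^{-1}$ for the conjugated map and recording a periodic orbit of type $p/q$ as a finite sequence $(\Phi_j,\bar H_j)=F^{j}(\Phi_0,\bar H_0)$, $0\le j\le q$, with $\bar H_q=\bar H_0$ and, in the lift to $\mathbb{R}$, $\Phi_q=\Phi_0+p$. In these coordinates the dynamics is an $O(|\bar H|^{k+1})$ perturbation of the integrable twist $(\Phi,\bar H)\mapsto(\Phi+\alpha+\bar H\Delta(\bar H),\bar H)$, for which the periodic orbit of type $p/q$ sits exactly on the level $\bar H\Delta(\bar H)=p/q-\alpha$; since $\Delta(0)\neq 0$ this level is $|\bar H|\approx|\alpha-p/q|/|\Delta(0)|$, already the desired strip with $K\approx 1/|\Delta(0)|$. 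The whole content of the proof is therefore to show that the remainder $R$ in \cref{eq:normal_form} does not spoil this picture.

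The first key step is a topological observation that rules out any sign change of $\bar H$ along the orbit. The circle $\Gamma=\{\bar H=0\}$ is invariant and separates a neighborhood of $\Gamma$ into the two components $\{\bar H>0\}$ and $\{\bar H<0\}$; because $\widetilde T$ is symplectic, $F$ is an orientation-preserving homeomorphism fixing $\Gamma$, and hence maps each component to itself. Consequently an orbit that does not lie on $\Gamma$ (the case whenever $p/q\neq\alpha$) keeps $\bar H_j$ of one constant sign, say $\bar H_j\ge 0$ for all $j$, the other case being symmetric.

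Next I would read off the rotation condition. Summing the first component of \cref{eq:normal_form} over one period and using $\Phi_q-\Phi_0=p$ gives
\[
p = q\alpha + \sum_{j=0}^{q-1}\bar H_j\,\Delta(\bar H_j) + \sum_{j=0}^{q-1} R_\Phi(\Phi_j,\bar H_j),
\]
so that $\frac1q\sum_{j=0}^{q-1}\bar H_j\Delta(\bar H_j)=(p/q-\alpha)-\frac1q\sum_j R_\Phi$. Shrinking the neighborhood so that $|\Delta|\ge\delta_0>0$ there, the constant sign of $\bar H_j$ makes the left-hand side a genuine average of the nonnegative quantities $\bar H_j\Delta(\bar H_j)\ge\delta_0\,\bar H_j$, while the remainder bound gives $|\frac1q\sum_j R_\Phi|\le C_kM^{k+1}$ with $M=\max_j\bar H_j$. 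This already controls the average by $\frac1q\sum_j\bar H_j\le\delta_0^{-1}\bigl(|\alpha-p/q|+C_kM^{k+1}\bigr)$.

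The main obstacle, and the final step, is to pass from this averaged estimate to the pointwise bound $M\le K|\alpha-p/q|$. For this I would use the second component of \cref{eq:normal_form}, namely $\bar H_{j+1}-\bar H_j=R_H(\Phi_j,\bar H_j)$ with $|R_H|\le C_kM^{k+1}$, so that along the $q$-periodic orbit the oscillation obeys $\max_j\bar H_j-\min_j\bar H_j\le\tfrac12\sum_j|R_H|\le\tfrac12\,qC_kM^{k+1}$. Combining this with the average bound yields an inequality of the form $M\bigl(1-c_1M^{k}-c_2qM^{k}\bigr)\le\delta_0^{-1}|\alpha-p/q|$. The delicate point is the factor $qM^k$ in the corrections: this is where the hypothesis that $|\alpha-p/q|$ be small is used, together with the freedom to take the normal-form order $k$ large once $r$ exceeds the needed multiple of $\tau$. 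For the continued-fraction convergents $p/q$ relevant to the periodic-approximation scheme one has $|\alpha-p/q|\asymp 1/(qq')\le q^{-2}$, hence $q\le|\alpha-p/q|^{-1/2}$ and $qM^{k}\lesssim|\alpha-p/q|^{\,k-1/2}\to 0$ already for $k\ge 1$; absorbing these higher-order terms then gives the claim with $K=2/|\Delta(0)|$, depending only on the map and the circle through $\Delta(0)$, $\delta_0$, $C_k$, and the size of the normal-form neighborhood. I expect this conversion of the averaged estimate into a uniform strip bound — the bookkeeping of how $R$ accumulates over a long $q$-periodic orbit — to be the technically demanding part, whereas the trapping and twist arguments are structural.
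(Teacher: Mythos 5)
Your route is genuinely different from the paper's. The paper does not work with the normal form along the orbit at all: it traps the $p/q$-orbit between two invariant KAM circles --- $\Gamma$ itself (rotation number $\alpha$) and a nearby circle of rotation number $\alpha'$ chosen so that $p/q\in(\alpha,\alpha')$ and $|\alpha'-p/q|\le|\alpha-p/q|$ --- invokes the Lipschitz dependence of the invariant circle on its rotation number to place the second circle inside the strip $|\bar H|\le \tfrac{K}{2}|\alpha-\alpha'|$, and then uses the twist property to conclude that the periodic orbit must lie between the two circles. That argument both confines the orbit to the normal-form neighborhood and yields the pointwise bound in one stroke, with no need to track how the remainder $R$ accumulates over $q$ iterates.

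The accumulation of $R$ is precisely where your argument has a genuine gap. Your final inequality is of the form $M\bigl(1-c_1M^{k}-c_2\,qM^{k}\bigr)\le\delta_0^{-1}|\alpha-p/q|$, and to absorb the $qM^{k}$ term you estimate $qM^{k}\lesssim|\alpha-p/q|^{-1/2}M^{k}\lesssim|\alpha-p/q|^{k-1/2}$ --- but the second step already assumes $M\lesssim|\alpha-p/q|$, which is the conclusion of the lemma. The only a priori bound available is $M\le\rho$ with $\rho$ the (fixed) size of the normal-form neighborhood, and $q\rho^{k}\to\infty$, so the bootstrap does not close: with the crude bound $|R_H|\le C_kM^{k+1}$ the drift of $\bar H$ over $q$ steps is not controlled without first knowing $M$ is already as small as $|\alpha-p/q|\sim q^{-2}$. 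A related, smaller gap is that you assume the entire orbit lies in the normal-form neighborhood (needed even to write $(\Phi_j,\bar H_j)$ and to run the constant-sign argument), which is exactly what the paper's twist-confinement step supplies; and the constant-sign claim needs slightly more than ``orientation-preserving'' (a map such as $(\Phi,\bar H)\mapsto(-\Phi,-\bar H)$ preserves orientation and the circle $\bar H=0$ yet swaps the sides --- you should say that $F$ is isotopic to the identity, which holds here because it is a perturbation of a twist). The averaged estimate you derive from summing the first component of the normal form is correct and is a useful ingredient in related arguments (e.g., for convergence of actions of periodic orbits), but on its own it does not deliver the uniform strip bound; I would recommend replacing the last step by the KAM-circle sandwich as in the paper, or at least supplying a non-circular mechanism to pass from the average to the maximum.
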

\begin{proof}
The idea of the proof, see~\cite{Falcolini1992} for details, is that since $\widetilde{T}$ is a small perturbation of an integrable mapping, there exists invariant K.A.M. tori whose rotation number $\alpha'$ is such that $\vert \alpha' - p/q \vert \leq \vert \alpha - p/q \vert $ and $p/q$ belongs to the interval $(\alpha, \alpha')$. Then, because the map which to a rotation number associates the corresponding invariant circle is Lipschitz, the circle of rotation number $\alpha'$ is contained in the strip
\[
\vert \bar{H} \vert \leq K/2 \vert \alpha - \alpha' \vert \leq K \left \vert \alpha - \frac{p}{q} \right \vert.
\]
Finally, as a consequence of the twist property, the periodic orbit of type $p/q$ has to be contained between the circles of rotation number $\alpha'$ and $\alpha$.
\end{proof}
We can now easily show that the computation of periodic ground states allows us to approximate the incommensurate ground states, i.e., the associated hull function. To do so, let us consider a periodic minimum energy configuration, characterized equivalently by a finite sequence $s_0, \dots, s_{q-1}$ and $s_q = s_0 + p$, or the initial point $(p_0, \theta_0)$. Without loss of generality, we choose $s_0 \in [0,1)$.

We define a function $F_q: [0,1] \to [s_0,s_0+1]$ by associating:
\begin{itemize}
\item to each point $x_j = \{ j \frac{p}{q}\} $ the value $\{ s_j - s_0\} + s_0$ for $j =0, \dots, q-1$, where $\{ \cdot \}$ denotes the fractional part,
\item to $x_q = 1$ the value $s_0 + 1$,
\item finally, the periodic function $x \mapsto F_q(x) - x$ is written as the linear interpolant of the previously defined values:
\begin{equation}\label{eq:PeriodicHullFunction}
	F_q: x \mapsto s_{j} +  \frac{x - x_j}{x_{j+1} - x_j} (s_{j+1} - s_j)\qquad \text{ for } x_j \leq x \leq x_{j+1}.
\end{equation}
\end{itemize}
This function can then easily be extended to $\mathbb{R}$ by the rule $F_q(x+1) = F_q(x)+1$.
\begin{proposition}\label{prop:HullFunctionApproximation}
Let the mapping $\widetilde{T}$ and a smooth invariant circle $\Gamma$ be as in \cref{prop:normal_form} with $k \geq 1$. Then, for $\vert \alpha - p/q \vert$ small enough, there exists a constant $C$ depending only on the system and on the circle, such that if $f$ is a hull function for the set of ground states associated with $\Gamma$, there exists a phase $\omega \in \mathbb{R}$ such that
\begin{equation}\label{eq:HullFunctionApproximation}
\left \Vert F_q - f(\cdot + \omega) \right \Vert_\infty \leq C \left ( \left \vert \alpha- \frac{p}{q} \right \vert + \frac{1}{q^2} \right ).
\end{equation}
\end{proposition}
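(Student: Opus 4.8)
The plan is to transport everything into the normal-form coordinates $(\Phi,\bar H)$ of \cref{prop:normal_form}, in which the invariant circle $\Gamma$ is $\{\bar H = 0\}$, the hull function $f$ becomes (a lift of) the $\Phi$-parameterization of $\Gamma$, and the dynamics on $\Gamma$ is the rigid rotation $\Phi \mapsto \Phi + \alpha$. I will compare the periodic orbit of type $p/q$ to this rotation in three stages: a transverse (action) estimate placing the orbit near $\Gamma$, a tangential (angle) estimate showing its phases are nearly equidistributed, and a one-dimensional interpolation estimate for $F_q$. Throughout, $f$ is normalized so that $f - \mathrm{id}$ is $1$-periodic, as in \cref{th:AubryLeDaeron}.

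First (localization), by \cref{lemma:strip} the orbit lies in the strip $|\bar H| \le K|\alpha - p/q|$, and since $\psi^{-1}$ is Lipschitz near $\Gamma$ its physical positions obey $|s_j - f(\Phi_j)| \le C|\alpha - p/q|$, where $\Phi_j$ is the angle coordinate of the $j$-th orbit point; so it remains only to compare the phases $\Phi_j$ with a uniform rotation. For that (equidistribution), \cref{eq:normal_form} gives the increments $\Phi_{j+1} - \Phi_j = \alpha + \bar H_j\Delta(\bar H_j) + R_\Phi$, with $\sum_{j=0}^{q-1}(\Phi_{j+1}-\Phi_j) = p$ by periodicity. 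Setting $\omega = \Phi_0$, I will show $|\Phi_j - (jp/q + \omega)| \le C/q^2$ using two facts: because $k \ge 1$ the action is nearly conserved, $\|R\| \le C_1|\bar H|^2 = O(|\alpha-p/q|^2)$, so the $\bar H_j$ vary by only $O(q|\alpha-p/q|^2)$ along the orbit; and the constant part $\alpha - p/q + \bar H_0\Delta(\bar H_0)$ of the increment is forced small by the zero-mean (periodicity) constraint, the frequency having been tuned to $p/q$. Summing the resulting nearly-constant, nearly-zero-mean increments yields the phase bound in the regime where $q^2|\alpha - p/q|$ stays bounded.

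It then remains to pass from the nodes to the full function. The nodes $x_j = \{jp/q\}$ are exactly $\{0, 1/q, \dots, (q-1)/q\}$, so by \cref{eq:PeriodicHullFunction} $F_q$ is the piecewise-linear interpolant of $f(\cdot + \omega)$ on a uniform grid of spacing $1/q$, up to the node errors just controlled (combined through the Lipschitz continuity of $f$, these give node error $C(|\alpha-p/q| + q^{-2})$). Since $k \ge 1$ forces $r > 1 + \tau \ge 2$, the hull function $f$, a $C^r$ reparameterization of $\Gamma$, is in particular $C^2$, so the standard linear-interpolation estimate contributes an additional $O(q^{-2})$. Adding the two bounds gives \cref{eq:HullFunctionApproximation}.

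I expect the main obstacle to be the equidistribution step: controlling the accumulation of the phase error over a full period of $q$ iterates. The naive bound $q \cdot \max_j |\Phi_{j+1} - \Phi_j - p/q|$ is too crude; the gain comes from recognizing that the increments are \emph{nearly constant} (via approximate conservation of $\bar H$, which is exactly why $k \ge 1$ is assumed) and have \emph{nearly zero mean} (via the frequency tuning), so their partial sums telescope to size $O(q^2|\alpha - p/q|^2)$, which is absorbed into the $q^{-2}$ term precisely when $p/q$ is a good approximation of $\alpha$. Handling the lifts and the fractional-part normalization in the definition of $F_q$ against the lift of $f$ is then routine bookkeeping.
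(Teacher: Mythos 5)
Your architecture is the same as the paper's: pass to the normal-form coordinates of \cref{prop:normal_form}, use \cref{lemma:strip} to confine the $p/q$-orbit to the strip $\vert \bar H\vert \le K\vert\alpha-p/q\vert$ and transport the resulting action error back through the Lipschitz map $\psi^{-1}$, exploit periodicity to pin the phase increments near $p/q$, observe that $\{jp/q \bmod 1\}$ is the uniform grid $\{i/q\}$ since $\gcd(p,q)=1$, and finish with the $O(q^{-2})$ piecewise-linear interpolation error for the $C^2$ hull function. Those bookends match the paper's proof essentially verbatim.

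The one place you diverge is the equidistribution step, and there your accounting is genuinely weaker than what the proposition asserts. You use the global zero-mean constraint once, together with the drift bound $\vert\bar H_j-\bar H_0\vert\le j\max\vert R\vert = O(q\vert\alpha-p/q\vert^{2})$, which gives per-increment deviations of size $O(q\vert\alpha-p/q\vert^{2})$ and hence a total phase error $O(q^{2}\vert\alpha-p/q\vert^{2})$. As you yourself flag, this is absorbed into $C(\vert\alpha-p/q\vert+q^{-2})$ only when $q^{2}\vert\alpha-p/q\vert$ stays bounded, i.e., for continued-fraction-quality approximants. The proposition (and the paper's numerical test, which takes \emph{every} $q\le 1000$ with $p$ the nearest integer to $\alpha q$, where $q^{2}\vert\alpha-p/q\vert$ can be of order $q$) requires only that $\vert\alpha-p/q\vert$ be small. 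The paper closes this by applying the $q$-fold return relation $\widetilde T^{q}(\Phi_j,\bar H_j)=(\Phi_j+p,\bar H_j)$ at \emph{every} basepoint $j$, obtaining the uniform per-increment estimate $\vert \alpha+\bar H_j\Delta(\bar H_j)-p/q\vert\le C\vert\alpha-p/q\vert^{k+1}$ with no factor of $q$; summing $q$ such increments then gives a phase error $O(q\vert\alpha-p/q\vert^{k+1})\le O(\vert\alpha-p/q\vert)$ for $k\ge 1$, using only $\vert q\alpha-p\vert\le 1$. You should either import that pointwise pinning of each increment (rather than only the mean), or state explicitly that your argument proves the estimate only along the convergents; as written, the restriction to $q^{2}\vert\alpha-p/q\vert=O(1)$ is a real gap relative to the stated result.
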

\begin{remark}
When $\alpha$ is of type $(K,\tau)$ with $\tau$ large, there are values of $q$ for which the dominant error term is the interpolation error, $O \left ( 1/q^2 \right )$. The error bound can then be improved by using e.g. spline or trigonometric interpolation.
\end{remark}
\begin{proof}
Let $(\Phi_0, \bar{H}_0), \dots, (\Phi_{q-1}, \bar{H}_{q-1})$ be the points of the orbit of type $p/q$ in the new set of coordinates around $\Gamma$, assuming that $\vert \alpha - p/q \vert$ is small enough.

From the normal form of the mapping~\cref{eq:normal_form}, we deduce that for $j = 0, \dots, q-1$,
\[
\left \Vert \left ( \Phi_j + q \left (\omega + \bar{H}_j \Delta(\bar{H}_j) \right ), \ \bar{H}_j \right ) - (\Phi_j, \bar{H}_j,) \right \Vert \leq q C_k \max_i \vert \bar{H}_i \vert^{k+1}.
\]
By \cref{lemma:strip}, we deduce then that
\[
\left \vert \omega + \bar{H}_j \Delta(\bar{H}_j)  - \frac{p}{q} \right \vert \leq K C_k \left \vert \alpha - \frac{p}{q} \right \vert^{k+1},
\]
and by the triangular inequality,
\[
\left \vert\bar{H}_j \Delta(\bar{H}_j)  - \bar{H}_0 \Delta(\bar{H}_0)  \right \vert \leq 2 K C_k \left \vert \alpha - \frac{p}{q} \right \vert^{k+1}.
\]
Since $\Delta$ is smooth and nonzero in a neighborhood of $\Gamma$, it follows that for some $C>0$,
\[
\left \vert\bar{H}_j  - \bar{H}_0 \right \vert \leq C \left \vert \alpha - \frac{p}{q} \right \vert^{k+1}.
\]
Next, we have
\[
\left \Vert \left ( \Phi_0 + j \left (\omega + \bar{H}_0 \Delta(\bar{H}_0) \right ), \ \bar{H}_0 \right ) - (\Phi_j, \bar{H}_j,) \right \Vert \leq C \left \vert \alpha - \frac{p}{q} \right \vert^{k+1},
\]
and using the previous estimates we obtain for some $C>0$, since $k \geq 1$,
\begin{equation}\label{eq:PeriodicOrbitProjection}
\left \vert  \psi^{-1}\left ( \Phi_0 + j\frac{p}{q},\ 0 \right ) - \psi^{-1}\left (\Phi_j , \bar{H}_j \right )\right \vert \leq C  \left \vert \alpha- \frac{p}{q} \right \vert.
\end{equation}
Let $\pi_1$ be the first projection on the angle variable $s$ in the cylinder, $f$ be a hull function for $\Gamma$, $\omega \in \mathbb{R}$ such that for all $x \in \mathbb{R}$,
\[
 f(\omega + x) \equiv \pi_1 \left ( \psi^{-1}\left ( \Phi_0 + x, 0 \right ) \right ) \quad (\text{mod } 1), \qquad \text{and} \qquad  f(\omega) = s_0  \in [0,1).
\]
By construction, it is enough to prove~\cref{eq:HullFunctionApproximation} for $x \in [\omega, \omega + 1]$. Now since $p$ and $q$ are coprime, for $i \in \{ 0 \dots q-1\}$ , there exists $j \in \{ 0 \dots q-1\}$ such that
\[
\frac{i}{q} \equiv j \frac{p}{q} \quad (\text{mod } 1), \quad \text{and} \quad \left \vert F_q \left (\frac{i}{q} \right ) - f \left (\omega + \frac{i}{q} \right ) \right \vert \leq C \left \vert \alpha- \frac{p}{q} \right \vert,
\]
where the last estimate follows from the construction of $F_q$ and~\cref{eq:PeriodicOrbitProjection}. Since $f$ belongs to $C^r$ and by linearity of the interpolation, we obtain that the rate of convergence of the linear interpolant satisfies
\[
\left \Vert F_q(\cdot) - f(\cdot + \omega) \right \Vert_{\infty} \leq C \left ( \left \vert \alpha- \frac{p}{q} \right \vert + \frac{1}{q^2} \right ).
\]
\end{proof}

This result provides us with an \textit{a priori} estimate for the convergence rate of the supercell approximations. It justifies in particular the use of the so-called artificial strain as a measure of the error introduced the system by stretching one or both of the chains to obtain a commensurate system. 

Let us stress that the estimate is valid only in the case where the mapping $\widetilde{T}$ has a smooth invariant circle $\Gamma$ with rotation number $\alpha$, or equivalently, when the hull function is smooth (see \cref{th:AubryLeDaeron}). In general, when the bending modulus $\beta$ decreases enough, the system will undergo a commensurate-incommensurate transition at a value $\beta_{c}$ which depends on $\alpha$. The hull function then becomes discontinuous and the estimate breaks down.  

\section{Numerical examples}\label{sec:NumericalExamples}
We propose now to illustrate the previous analysis by some numerical examples, both in the smooth and the discontinuous case.
For the purpose of computations in this section, we use the following set of parameters:
\begin{table}[h!] \centering
\caption{Set of parameters used for the numerical computations}\label{tab:MyNumericalParameters}
\begin{tabular}{c|c|c|c|c|c|c} \hline

\hline
$\alpha$ & $p_{m}$ & $q_{m}$ &  $\epsilon$ & $\sigma$ & $h$ & $\beta$\\ \hline
$ \dfrac{8}{13}\dfrac{1+\sqrt{5}}{2} \approx 0.9957$ \vspace{.1cm} & $2555$ & $2566$ & $1$ & $2$ & $2.1262$ & $\{ 764, \ 10 \}$
\\ \hline

\hline
\end{tabular}
\end{table}

\noindent where we recall that the distance between atoms is $1$ for chain $\mathcal{C}^1$ and $\alpha$ for chain $\mathcal{C}^\alpha$, the distance between chains is $h$ and the inter-atomic potential between atoms of different chains is chosen as the Lennard-Jones potential
\[
V(r) = 4 \epsilon \left ( \left ( \frac{\sigma}{r} \right )^{12} - \left ( \frac{\sigma}{r} \right )^6 \right ).
\]
The ratio $\alpha$ is chosen here arbitrarily to satisfy the following conditions: first, the two chains have almost the same period, $\alpha \approx 1$, so we expect a large Moir\'e pattern and the formation of ripples, and second, $\alpha$ is of Diophantine type, see \cref{prop:normal_form}. Both conditions are satisfied here since the golden ratio $(1+\sqrt{5})/2$ is of Diophantine type $(\sqrt{5}, 2)$, see e.g.~\cite{hardy1979introduction}, and $13/8$ is one of its rational approximant.

\subsection{Realistic bending modulus: smooth ripples} 
\label{sub:realistic_bending_modulus_smooth_ripples}

We present first the numerical results when choosing the realistic value $\beta = 764$ for the bending modulus $\beta$ (see \Cref{sec:motivating_numerical_example} and~\cite{kudin2001c,IliaPaper}). The conditions~\cref{eq:LowerBoundCondition,eq:TwistCondition} are satisfied in this case. The numerical procedure is as follows. For any choice of rational approximant $\alpha \approx p/q$, we set up a periodic system
\begin{equation}
\left \{ s^{p,q}_j  \right \}_{j = 1 \dots q} \quad \text{ with initial values } \quad s^{p,q}_{j,0} = j \frac{p}{q}, \quad j = 1 \dots q,	
\end{equation}
and we minimize numerically the energy per periodic cell given by~\cref{eq:potentialenergy1}. From the relaxed values obtained for $\{ s^{p,q}_j  \}$, we can then obtain approximations $F_q$ for the hull function by Formula~\cref{eq:PeriodicHullFunction}. We can also reconstruct approximately the parametric curves modeling the chains using the curvature formula~\cref{eq:curvature} and plot the ripple geometry.

We compute first a reference solution by using the periodic approximant $p_m = 2555$, $q_m = 2566$ for which the error (also called artificial strain) is approximately 
\[
\vert \alpha - p_m/q_m \vert \leq  6 \cdot 10^{-8}.
\]
 Results are presented in \cref{fig:Hull_Function_Smooth:a,fig:Hull_Function_Smooth:b}. Note that due to the scaling difference of the two axis, the oscillations of the chains are greatly exaggerated in \cref{fig:Hull_Function_Smooth:a}, giving the false impression of a greatly varying distance between the chains (see the orthonormal view on top).
\begin{figure}[ht!]
\begin{subfigure}{\textwidth}\centering
	\includegraphics[width=.98\textwidth]{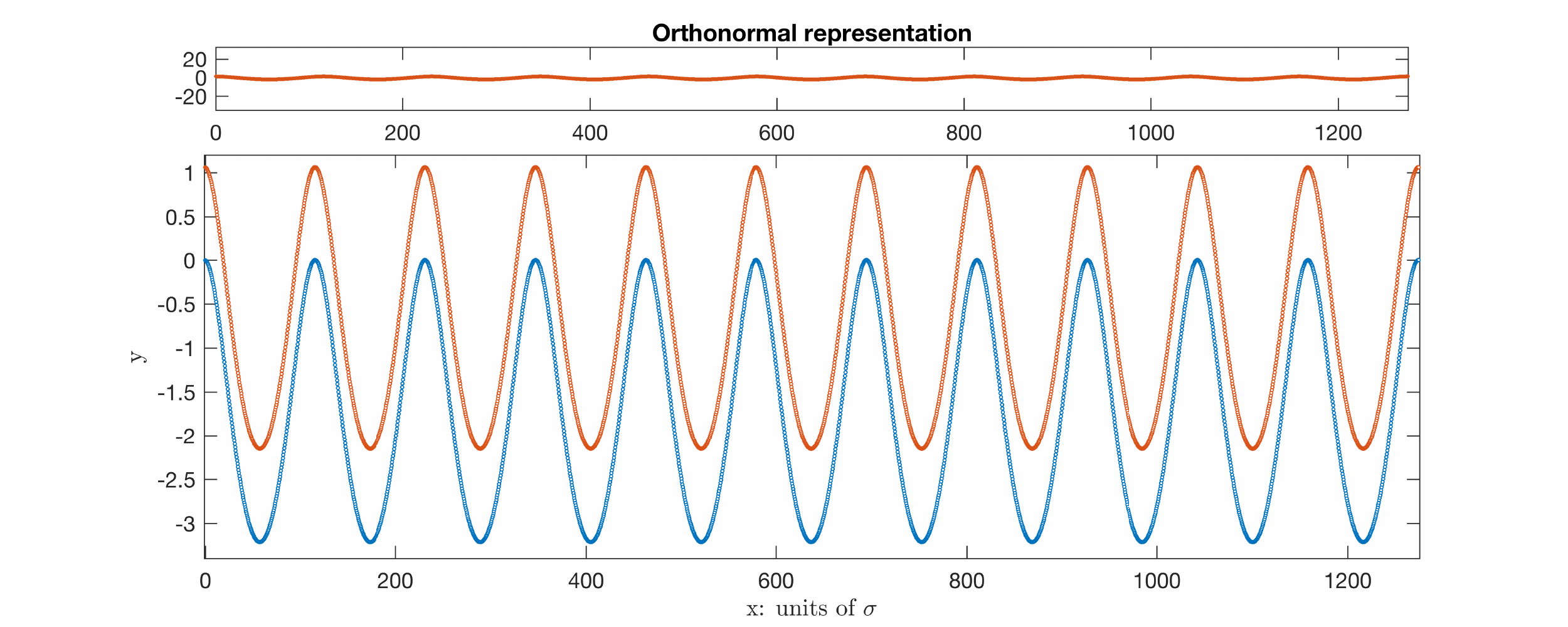}
	\caption{Reconstructed relaxed atomic chains}\label{fig:Hull_Function_Smooth:a}
\end{subfigure}
\begin{center}
	\begin{subfigure}{.49\textwidth}\centering
		\includegraphics[width=\textwidth]{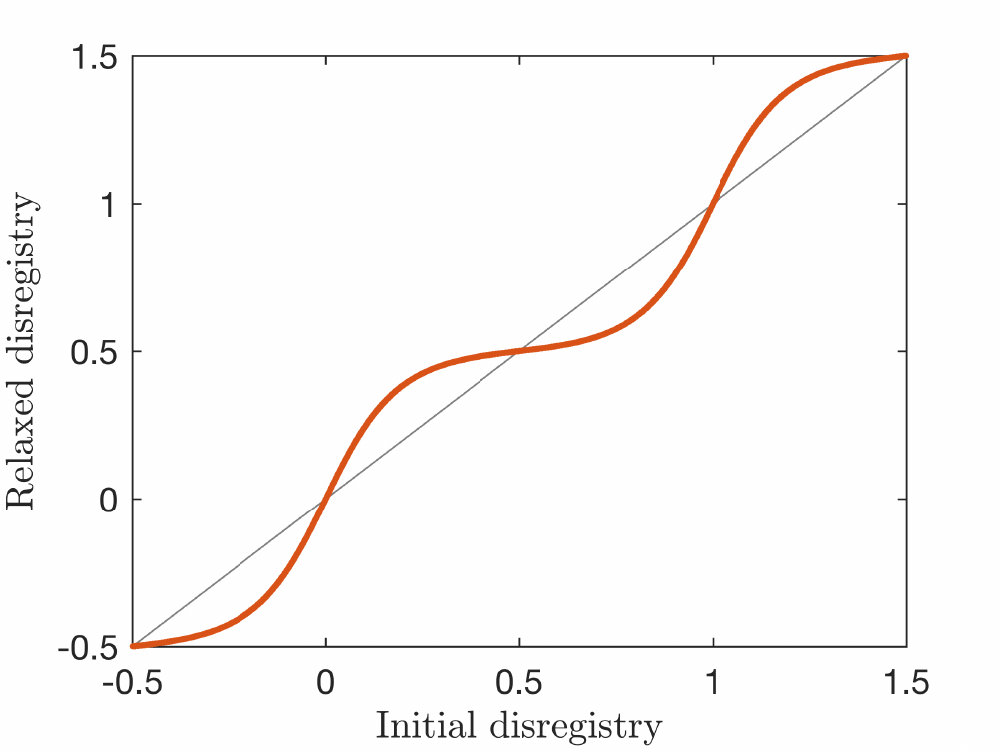}
		\caption{Hull function $F_q$}\label{fig:Hull_Function_Smooth:b}
	\end{subfigure}
	\begin{subfigure}{.49\textwidth}\centering
		\includegraphics[width=\textwidth]{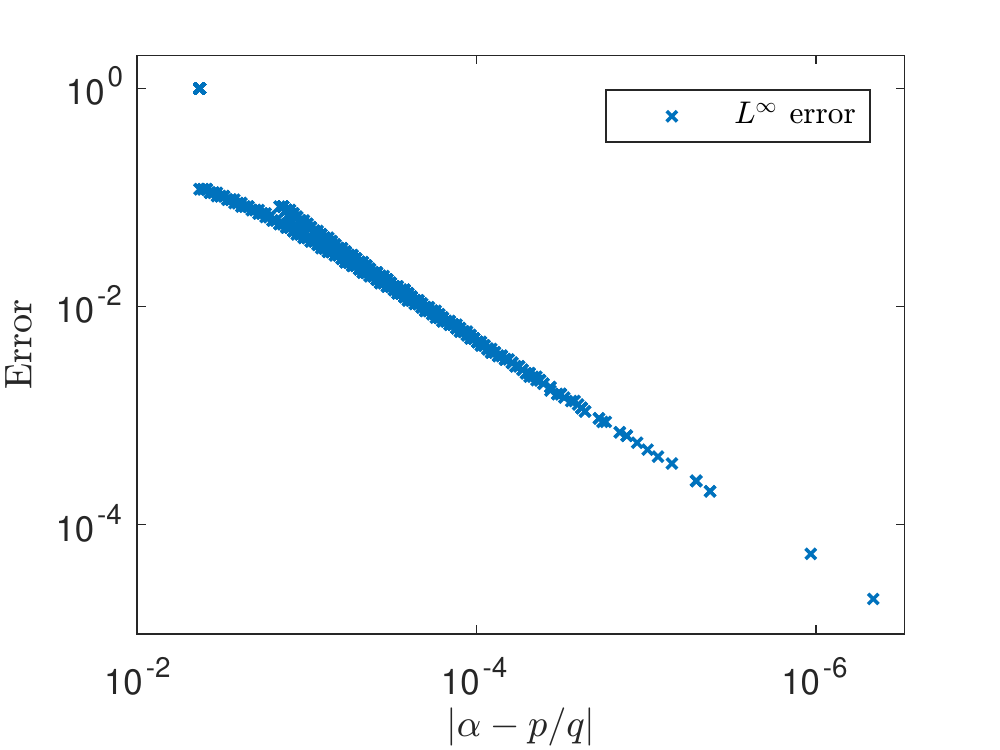}
		\caption{Estimated error for $F_q$}\label{fig:Hull_Function_Smooth:c}
	\end{subfigure}
\end{center}
\caption{Realistic case simulation results: $\beta = 764$}\label{fig:Hull_Function_Smooth}
\end{figure}
As expected, the relaxation process increases the number of atoms in a favorable staggered configuration, $s \approx .5\ (\textrm{mod}\ 1)$. We observe that both the shape of the ripples and the hull function show a smooth behavior. The overall shape of the twin curves seem almost perfectly periodic (even if the atoms are not distributed periodically on the curve) with the Moir\'e pattern period $ L_{M} \approx 106 \sigma$.  This indicates that a purely continuous approach is likely to be succesfuly in capturing the overall behavior, since atomic details (here, the atomic disregistry) integrate out at the mesoscopic scale.

Next, we evaluate numerically the convergence behavior of the numerical approach. We consider all periodic approximants $p/q$ to $\alpha$ with $q \leq 1000$, where $p$ is the closest integer to $\alpha q$, and we compute for each the relaxed configuration $ \{ s^{p,q}_j \}_{j = 1 \dots q}$. We evaluate then the error with respect to the reference configuration computed above, using the approximate error norm
\begin{equation}
	\Vert F_q - F_{q_m} \Vert_{q, \infty} = \min_{\omega \in [0,1)]}\max_{j = 1\dots q} \left \vert s^{p,q}_j - F_{q_m}\left ( j \frac{p}{q} + \omega \right ) \right \vert.
\end{equation}

The result of this calculation is presented in \cref{fig:Hull_Function_Smooth:c}. The computed error falls rapidly on a single straight line as a function of decreasing $\alpha - p/q$, showing good agreement with the estimate~\cref{eq:HullFunctionApproximation}. This supports the intuitive idea that the artificial strain $\vert \alpha - p/q\vert $ controls the error, rather than the size $q$ of the periodic supercell.

\begin{remark}
Note that the distance between layers $h$ above is determined by minimizing the energy with respect to $h$ only, e.g. for two flat layers, where the energy can be computed numerically.
\end{remark}
\begin{remark}
It is also possible to deal with a varying distance $h$, if we make the assumption that this distance is locally determined as a function of the position of the atom $S^\alpha_j$: indeed, this amounts to using a new potential given by the formula (see also~\cref{eq:periodicpotential2} and \cref{fig:ParabolicApproximation}):
\[
\widehat{V}_{\mathrm{per}}(s^\alpha_j; \kappa^\alpha_j) = \min_{h>0} \widehat{V}_{\mathrm{per}}(s^\alpha_j; \kappa^\alpha_j, h).
\]
\end{remark}


\subsection{Weak bending modulus: nonsmooth behavior} 
\label{sub:nonsmooth_behavior_for_a_weak_bending_modulus_value}
\begin{figure}[t!]
\begin{subfigure}{\textwidth}\centering
	\includegraphics[width=.98\textwidth]{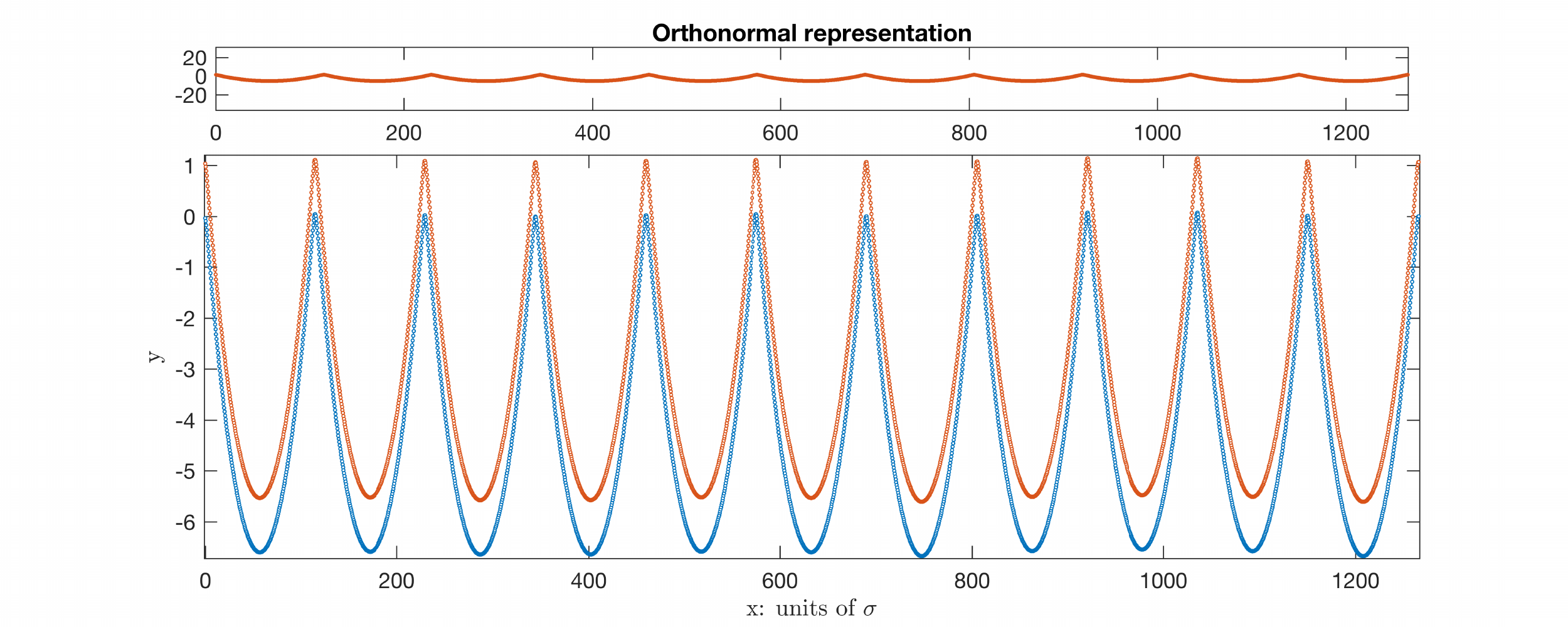}
	\caption{Reconstructed relaxed atomic chains}\label{fig:Hull_Function_NonSmooth:a}
\end{subfigure}
\begin{center}
	\begin{subfigure}{.49\textwidth}\centering
		\includegraphics[width=\textwidth]{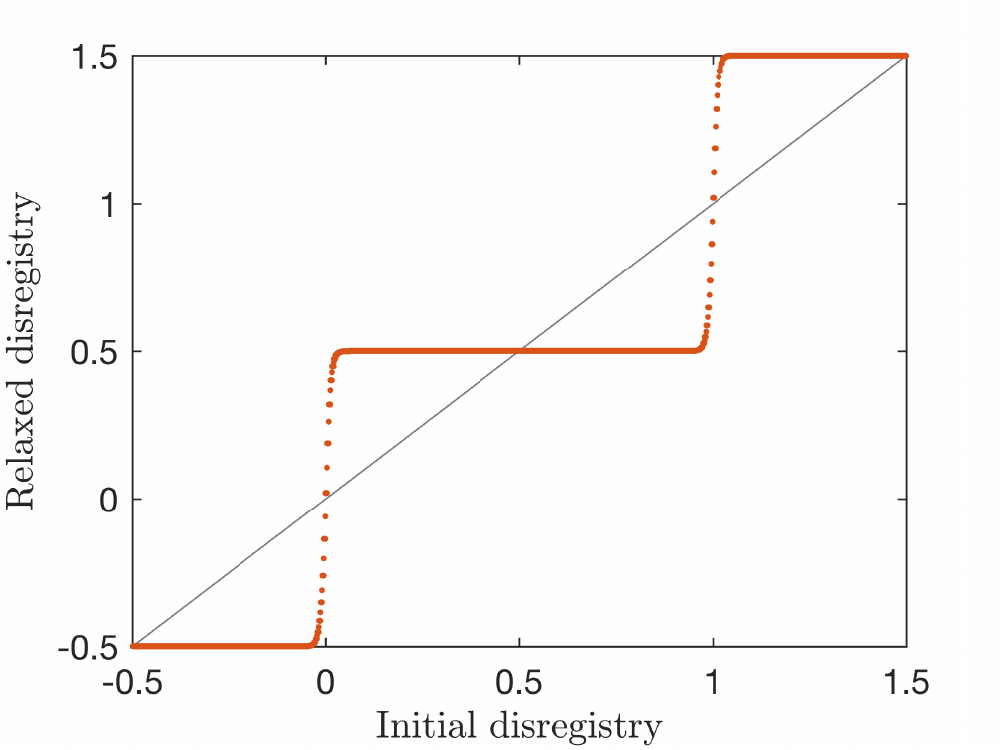}
		\caption{Hull function $F_q$}\label{fig:Hull_Function_NonSmooth:b}
	\end{subfigure}
	\begin{subfigure}{.49\textwidth}\centering
		\includegraphics[width=\textwidth]{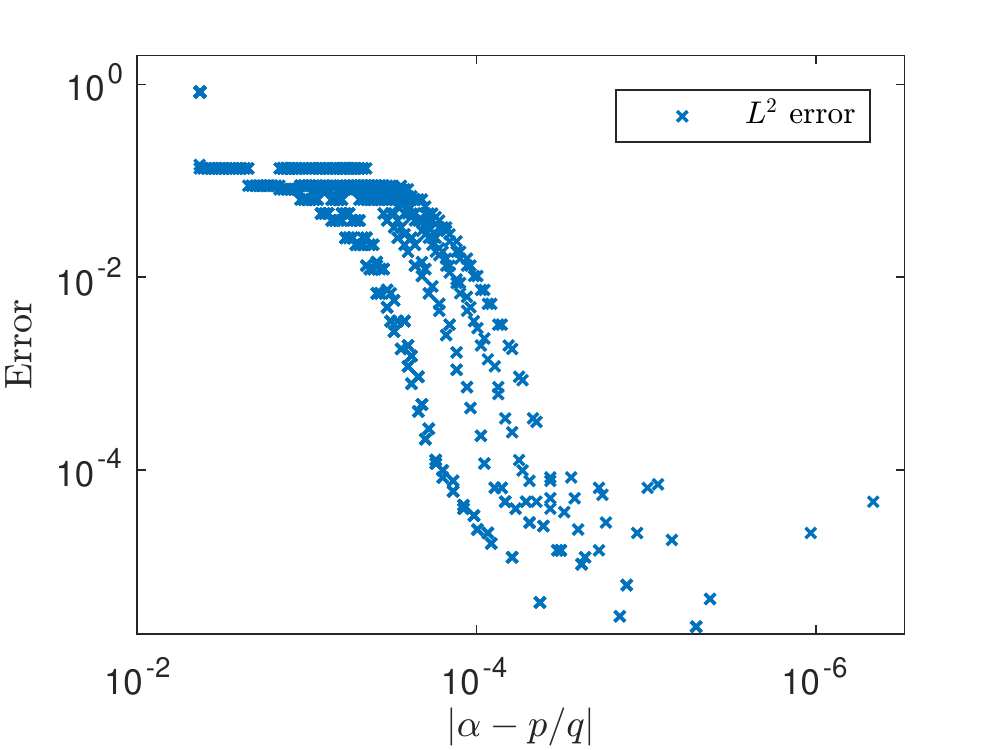}
		\caption{Estimated error for $F_q$}\label{fig:Hull_Function_NonSmooth:c}
	\end{subfigure}
\end{center}
\caption{Nonsmooth case simulation results: $\beta = 10$}\label{fig:Hull_Function_NonSmooth}
\end{figure}

Finally, we present the results for a weakened system of chains with bending modulus $\beta = 10$. Note that the twist condition~\cref{eq:TwistCondition} is still valid in this case. The new results are presented in \cref{fig:Hull_Function_NonSmooth}. The relaxation pattern in this case is quite different from the previous case: the ripples present a sharp peak separating regions of almost constant curvature, following the Moir\'e pattern. This allows the two chains to form commensurate regions where the atoms are all in the favorable staggered position, separated by \textit{solitons} or \textit{domain walls} which are classical in the study of the Frenkel-Kontorova model~\cite{AubryLeDaeron1983, BraunKivshar2004frenkel}.

The hull function is discontinuous in this case, with a large plateau at half-integer values corresponding to favorable staggered configurations and well-defined transition values forming smaller plateaus separated by forbidden gaps. This indicates a different mechanical behavior from the smooth case, as the sharp peaks are exponentially localized solitons, mechanically pinned by the Peierls-Nabarro potential~\cite{peierls1940size,nabarro1947dislocations,BraunKivshar2004frenkel}.

Another difference is a small, but nonzero secondary oscillation of the system of ripples, which can be observed by comparing the vertical position of the maxima of the reconstructed chains on \cref{fig:Hull_Function_NonSmooth:a}. This indicates the onset of the discrete interaction between solitons and substrate (Peierls-Nabarro potential) as well as between solitons at the mesoscopic scale. This is in contrast to the smooth case where atomic details could be integrated out in a continuum description at the mesoscale.

Numerically, there is also a marked difference in the convergence behavior. We observe on \cref{fig:Hull_Function_NonSmooth:c} that the computed errors do not fall onto a single line as in the smooth case. On average, we observe that the errors fall exponentially fast for $\vert \alpha - p/q \vert > 10^{-5}$, and then stagnate. Note that in this case, there is no smooth invariant circle with rotation number $\alpha$ for the map $\widetilde{T}$ (see \cref{sec:ApproximationPeriodic}), so the estimate~\cref{eq:HullFunctionApproximation} does not hold. Hence, in principle some subsequences of the periodic approximations may converge to metastable defect configurations.


\section*{Conclusion} 
In summary, we have presented a new model for the spontaneous relaxation for free-standing systems of incommensurate van der Waals bilayers. This relaxation phenomenon is driven by the trade-off between bending energy and the weak interlayer interactions and the associated disregistry-dependent local configuration potential. Note that this is a different mechanism from the widely studied intrinsic monolayer rippling observed, e.g., in suspended graphene~\cite{meyer2007structure}, which can be explained by phononic thermal fluctuations~\cite{fasolino2007intrinsic} and stress~\cite{bao2009controlled}. It differs also from ripples caused by interactions with a periodic substrate~\cite{ni2012quasi}.

To study mathematically the relaxation of such systems, we have introduced a new one-dimensional double chain model in which consecutive atoms in each chain are constrained to remain at a fixed distance. Relaxation occurs through spontaneous bending of the chains to minimize the interlayer interaction energy. This model can be seen as a new application of the generalized Frenkel-Kontorova model~\cite{BraunKivshar2004frenkel}.

As a result, we have identified the ground state configurations of the system, and we have given rigorous error estimates for the application to our model of the popular supercell approach used to approximate incommensurate systems. We have also presented numerical results supporting the analysis, both in the case of a realistic bending modulus where smooth quasi-periodic ripples can be numerically observed, and for a weaker bending modulus where commensurated domains separated by sharply peaked solitons appear. The transition between the two regimes is the famous commensurate-incommensurate transition~\cite{AubryLeDaeron1983}.

The next step will be to extend this model to the two-dimensional case. A particular difficulty is that perfectly inextensible two-dimensional sheets cannot be curved in more than one direction, and thus the model should be extended to account for small, but nonzero, in-plain strains.


\section{Acknowledgments}
This work was supported in part by ARO MURI Award W911NF-14-1-0247.  Mitchell Luskin was also supported in part by the Radcliffe Institute for
    Advanced Study at Harvard University. 

\bibliographystyle{siamplain}
\bibliography{Biblio}
\end{document}